\pgfplotsset{width=7.5cm,compat=1.9}
\DeclareMathOperator{\Aut}{Aut}
\DeclareMathOperator{\SL}{SL}
\DeclareMathOperator{\pg}{PG}
\DeclareMathOperator{\Sp}{Sp}
\DeclareMathOperator{\GL}{GL}
\DeclareMathOperator{\GaL}{\Gamma L}
\DeclareMathOperator{\Sym}{Sym}
\DeclareMathOperator{\s}{S}
\renewcommand{\b}{\mathbf}
\renewcommand{\leq}{\leqslant}
\renewcommand{\geq}{\geqslant}
\renewcommand{\unlhd}{\trianglelefteqslant}
\newcommand{\ra}{\rightarrow}
\newcommand{\F}{\mathbb F}
\renewcommand{\L}{\mathcal L}
\renewcommand{\P}{\mathcal P}
\renewcommand{\S}{\mathcal S}
\newcommand{\V}{\mathcal V}
\newcommand{\W}{\mathsf W}
\newcommand{\E}{\mathcal E}
\theoremstyle{plain}
\newtheorem{lemma}{Lemma}
\newtheorem{theorem}[lemma]{Theorem}
\newtheorem{proposition}[lemma]{Proposition}
\newtheorem{corollary}[lemma]{Corollary}
\theoremstyle{definition}
\newtheorem{definition}[lemma]{Definition}
\newtheorem{example}[lemma]{Example}
\numberwithin{equation}{section}
\numberwithin{lemma}{section}
\begin{document}

\title{Symmetries of regular $q$-graphs
}
\author{
Daniel R. Hawtin\footnotemark[1]~ and 
Padraig \'{O} Cath\'{a}in\footnotemark[2]
}
\date{
\today
}

\maketitle
\begin{abstract}
 Given a finite vector space $V=\F_q^n$, the $q$-analogue of a graph, called a \emph{$q$-graph}, is a pair $\Gamma=(\V,\E)$, where $\V$ is the set of $1$-dimensional subspaces of $V$ and $\E$ is a subset of the $2$-dimensional subspaces of $V$. Elements of $\V$ and $\E$ are called \emph{vertices} and \emph{edges}, respectively. If the edges through a vertex $X$ consist of all $2$-spaces of a $(k+1)$-dimensional space which contain $X$, regardless of the choice of vertex, then $\Gamma$ is \emph{$k$-regular}. Moreover, $\Gamma$ is \emph{flag-transitive} if there is a subgroup of $\GaL_n(q)$ preserving $\E$ and acting transitively on the set of all incident vertex-edge pairs; and \emph{symmetric} if there is a subgroup of $\GaL_n(q)$ preserving $\E$ and acting transitively on the set of all ordered pairs of adjacent vertices. 
 
 This paper classifies all $k$-regular $q$-graphs that are either flag-transitive or symmetric. The $q$-graphs in the classification are constructed from familiar objects in finite geometry, including spreads, symplectic polar spaces, and generalised hexagons. The classification depends essentially on the classification of transitive linear groups, and thus ultimately on the classification of finite simple groups. 
\end{abstract}

\renewcommand{\thefootnote}{\fnsymbol{footnote}}
\footnotetext[1]{Faculty of Mathematics, University of Rijeka, 51000 Rijeka, Croatia. Email: \href{mailto:dhawtin@math.uniri.hr}{dhawtin@math.uniri.hr}}
\footnotetext[2]{Údarás na Gaeltachta, Na Forbacha, Co. na Gaillimhe, formerly Fiontar \& Scoil na Gaeilge, Dublin City University, Dublin 9, Ireland. Email: \href{mailto:p.ocathain@gmail.com}{p.ocathain@gmail.com}}
\renewcommand{\thefootnote}{\roman{footnote}}

\section{Introduction}

Let $V = \F_{q}^{n}$ be a vector space over a finite field. Denote by $\mathcal{V}$ the set of one-dimensional subspaces of $V$, we reserve $\mathcal{E}$ for a subset of the 2-dimensional subspaces of $V$. Subspaces of $V$ are typically denoted by capital letters, thus we may refer to $U, W \in \mathcal{V}$ and to $E, F \in \mathcal{E}$, while lower case Latin letters will typically be vectors $v \in V$. 

A \emph{$q$-graph} $\Gamma$ is a pair $(\V,\E)$ where $\V$ and $\E$ are as above. The elements of $\V$ and $\E$ are called \emph{vertices} and \emph{edges}, respectively. 

If a vertex $X$ is contained in the edge $U$, we say that $X$ and $U$ are \emph{incident}, or that $(X, U)$ is a \emph{flag}. Distinct vertices $X,Y\in\V$ are \emph{adjacent} if their span $\langle X,Y\rangle$ is an edge. If the edges through a vertex $X$ consist of all $2$-spaces of a $(k+1)$-dimensional space which contain $X$, then $X$ is said to have \emph{$q$-degree} $k$. A $q$-graph $\Gamma$ is \emph{$k$-regular} if all vertices have $q$-degree $k$. 

The \emph{automorphism group} $\Aut(\Gamma)$ of $\Gamma$ is the setwise stabiliser of $\E$ in $\GaL_n(q)$. Let $G\leq \Aut(\Gamma)$. We say that $\Gamma$ is \emph{$G$-vertex-transitive} if $G$ acts transitively on $\V$, and \emph{$G$-edge-transitive} if $G$ acts transitively on $\E$. 

\begin{definition}\label{def:sym}
 Let $\Gamma=(\V,\E)$ be a $q$-graph on $V=\F_q^n$, and let $G\leq \Aut(\Gamma)$. 
 \begin{enumerate}[(1)]
  \item $\Gamma$ is \emph{$G$-flag-transitive} if $G$ acts transitively on the set of all pairs $(X,U)\in\V\times \E$ such that $X\leq U$.
  \item $\Gamma$ is \emph{$G$-symmetric} if $G$ acts transitively on the set of all pairs $(X,Y)\in\V\times \V$ such that $\langle X,Y\rangle\in\E$.
 \end{enumerate}
 If $\Gamma$ is $\Aut(\Gamma)$-flag-transitive or $\Aut(\Gamma)$-symmetric then we simply say that $\Gamma$ is \emph{flag-transitive} or \emph{symmetric}, respectively.
\end{definition}
For any $q$-graph, $G$-symmetric implies $G$-flag-transitive, and $G$-flag-transitive implies $G$-edge-transitive. If in addition $\Gamma$ is $k$-regular, with $k\geq 1$, and $G$-flag-transitive or $G$-symmetric, then every vertex is contained in some edge, so $\Gamma$ is also $G$-vertex-transitive. In this paper, we study $k$-regular $q$-graphs that are $G$-flag-transitive or $G$-symmetric or both. We state our main results below. 

\begin{theorem}\label{thm:main} 
 Suppose that $\Gamma=(\V,\E)$ is a $k$-regular and $G$-flag-transitive $q$-graph on $V=\F_q^n$ that is neither complete nor empty. 
 Then one of the following occurs. 
 \begin{enumerate}[$(1)$]
  \item $n$ is even, $k=1$, 
  and $\E$ induces a partition of $\V$, as in Example~\ref{ex:lineSpeads}. 
  \item $n\equiv 0\pmod{3}$, $k=2$, $q\in\{2,8\}$, and $\Gamma$ is a union of complete $q$-graphs, as in Example~\ref{ex:spreadsByProjPlanes}. 
  \item $n$ is even, $k=n-2$, $\Sp_n(q) \unlhd G$ and $\E$ is the set of lines of a symplectic polar space, as in Example~\ref{ex:symplecticPolarSpaces}. 
  \item $n=6$, $k=2$, $q$ is even, $G_{2}(q)' \unlhd G$, and $\E$ is the set of lines of a symplectic generalised hexagon, as in Example~\ref{ex:symplecticHexagons}. 
 \end{enumerate} 
\end{theorem}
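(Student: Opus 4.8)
\medskip
\noindent\emph{Set-up and the basic dichotomy.} For a vertex $X$ write $N(X)$ for the $(k+1)$-space supplied by $k$-regularity, so that $X$ and $Y$ are adjacent exactly when $Y\leq N(X)$; since $\langle X,Y\rangle=\langle Y,X\rangle$ this relation is symmetric, whence $Y\leq N(X)\iff X\leq N(Y)$. As $k\geq 1$ and $\Gamma$ is $G$-flag-transitive it is $G$-vertex-transitive, so $G$ acts transitively on the points of $\pg_{n-1}(q)$. The subspace $N(X)$ is $G_X$-invariant (it is determined by $X$), and flag-transitivity makes $G_X$ transitive on the $2$-spaces of $N(X)$ through $X$, that is, on the points of $\pg(N(X)/X)\cong\pg_{k-1}(q)$. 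Everything turns on the dichotomy: either $N(X)=N(Y)$ for some --- hence, by transitivity, every --- adjacent pair $X,Y$, or $N(X)\neq N(Y)$ for all of them; note that $N(X)\cap N(Y)\supseteq\langle X,Y\rangle$ always, so in the second alternative $2\leq\dim\bigl(N(X)\cap N(Y)\bigr)\leq k$.

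\medskip
\noindent\emph{The degenerate case, including $k=1$.} Suppose $N(X)=N(Y)$ for adjacent $X,Y$. Then $N$ is constant on each connected component of $\Gamma$, each component is the complete $q$-graph on a $(k+1)$-space, and --- the components partitioning $\V$ --- the set $\{N(X):X\in\V\}$ is a spread $\Sigma$ of $V$ by $(k+1)$-spaces, so $(k+1)\mid n$. If $k=1$ this alternative is forced (distinct $2$-spaces share at most one point), $\Sigma$ is a line spread and we obtain conclusion~(1), as in Example~\ref{ex:lineSpeads}. If $k\geq 2$, then $G$ permutes $\Sigma$ and the stabiliser of a member $W$ induces on $\pg(W)\cong\pg_k(q)$ a flag-transitive collineation group; by the classification of flag-transitive subgroups of $\PGaL_{k+1}(q)$ this group either contains $\PSL_{k+1}(q)$ or normalises a Singer cycle. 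In the first case $G$ would induce $\SL_{k+1}(q)$ on every member of $\Sigma$, which (as $\SL_{k+1}(q)$ fixes the $\F_q$-structure) forces $n=k+1$ and makes $\Gamma$ complete --- excluded. In the Singer-normaliser case a count of flags of $\pg_k(q)$ together with a divisibility check on the point-stabiliser forces $k=2$ and $q\in\{2,8\}$ (realised by the Frobenius group of order $21$ on the Fano plane and a regular group of order $657$ on $\pg_2(8)$), giving conclusion~(2) and Example~\ref{ex:spreadsByProjPlanes}.

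\medskip
\noindent\emph{The main case.} Now assume $k\geq 2$ and $N(X)\neq N(Y)$ for adjacent $X,Y$. First I would upgrade point-transitivity on $\pg_{n-1}(q)$ to transitivity on $V\setminus\{0\}$: by the classification of point-transitive subgroups of $\pg_{n-1}(q)$ either $G$ already has this property, or (up to finitely many exceptions) $G$ stabilises a proper $e$-spread structure on $V$, for some divisor $e>1$ of $n$; in the latter situation an analysis of the $G$-orbits on $\F_q$-$2$-spaces relative to that spread, together with $k$-regularity and induction on $n$, shows that $\E$ must be a union of complete $q$-graphs on $e$-spaces, contradicting $N(X)\neq N(Y)$. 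Thus $G$ is transitive on $V\setminus\{0\}$, and the classification of transitive linear groups (Hering's theorem) applies: up to finitely many exceptional $G$, one has $\SL_a(s)\unlhd G$, $\Sp_a(s)\unlhd G$, or $G_2(s)'\unlhd G$ with $s$ even, where $\F_s$ is an extension of $\F_q$ and $q^n=s^a$. If $s>q$ we are back in an excluded spread situation, so $s=q$. If $\SL_n(q)\unlhd G$ then $G$ is transitive on $2$-spaces and $\Gamma$ is complete or empty --- excluded. If $\Sp_n(q)\unlhd G$ then $\E$ is a union of $\Sp_n(q)$-orbits on $2$-spaces; the orbit of non-degenerate $2$-spaces spans $V$ from every point, so if it meets $\E$ then $N(X)=V$ and $\Gamma$ is complete, whence $\E$ is precisely the set of totally isotropic $2$-spaces, i.e.\ the symplectic polar space, with $N(X)=X^{\perp}$ and $k=n-2$ --- conclusion~(3), Example~\ref{ex:symplecticPolarSpaces}. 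If $G_2(q)'\unlhd G$ with $q$ even then $n=6$ and $G$ lies in the $\GaL_6(q)$-normaliser of $G_2(q)'$ (which normalises $\Sp_6(q)$); since flag-transitivity forces $\E$ to be a single $G$-orbit of $2$-spaces, and among the $G_2(q)'$-orbits on $2$-spaces only the set of lines of the split Cayley (symplectic) hexagon gives a $k$-regular $q$-graph --- with $N(X)$ the plane on the $q+1$ hexagon lines through $X$ and $k=2$ --- we obtain conclusion~(4), Example~\ref{ex:symplecticHexagons}. The finitely many exceptional transitive linear groups (in dimensions $4,6,\dots$) are then eliminated or absorbed into conclusions~(3)--(4) by direct inspection.

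\medskip
\noindent\emph{Where the difficulty lies.} I expect two parts to be delicate. The first is the reduction in the main case that rules out imprimitive and one-dimensional semilinear groups: showing that a proper field-extension structure on $V$ cannot coexist with $N(X)\neq N(Y)$ needs a careful study of $\GL$- and $\GaL$-orbits on $\F_q$-$2$-spaces inside $e$-spreads, with an induction that tracks flag-transitivity of the geometry induced on a spread member (and small subgroups of $\GaL_{n/e}(q^e)$ are exactly where one must work). The second is pinning down $\E$ inside the $\Sp_n(q)$ and $G_2(q)'$ candidates --- discarding the ``wrong'' $2$-space orbits via the $k$-regularity constraint --- and then working through the short list of exceptional transitive linear groups and the borderline parameters (for instance $A_6\leq\Sp_4(2)$, or $q=4$ in the plane-spread case) to decide which of them genuinely fail flag-transitivity.
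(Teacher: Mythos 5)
Your outline reaches the right list of examples and invokes the right external inputs (the classification of transitive linear groups, Higman--McLaughlin), but the two places you yourself flag as ``where the difficulty lies'' are precisely where the proof is missing, and they constitute most of the actual work. In the main case you assert that when $G$ preserves an $e$-spread (equivalently, lives over an extension field $K=\F_{q^b}$, including the case $G\leq\GaL_1(q^n)$), ``an analysis of the $G$-orbits on $2$-spaces\ldots shows that $\E$ must be a union of complete $q$-graphs on $e$-spaces.'' This is not an induction one can wave at: one must show that $N(X)\leq\langle X\rangle_K$, which the paper does by explicit $\F_qG_X$-submodule computations with root elements in each family ($\SL_a(q^b)$, $\Sp_a(q^b)$, $G_2(q^b)'$), using the fact that $N(X)/X$ is an irreducible $\F_qG_X$-module. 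Even once that is done, the one-dimensional semilinear case still has to be resolved: the constraint $k=2$, $q\in\{2,8\}$ ultimately comes from Foulser's canonical form for subgroups of $\GaL_1(q^n)$, a computation of the vertex stabiliser $\F_q^\times\rtimes\langle\alpha^s\rangle$, and a minimal-polynomial degree count giving $(k+1)t\geq q^{k-1}+\cdots+q+1$. None of this appears in your proposal, and your alternative route to $q\in\{2,8\}$ (Higman--McLaughlin applied to a spread member) presupposes both that the neighbourhoods form a spread and that the induced group on a member avoids $\PSL_{k+1}(q)$ --- the latter being exactly what the skipped semilinear analysis supplies.

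There are also two smaller but genuine logical problems. First, your opening dichotomy rests on ``$N(X)=N(Y)$ for some --- hence, by transitivity, every --- adjacent pair,'' but flag-transitivity is \emph{not} transitivity on ordered adjacent pairs (that is the strictly stronger symmetric condition of Definition~\ref{def:sym}), so the claim that the relation $N(X)=N(Y)$ holds for all adjacent pairs once it holds for one is unjustified as stated; the dichotomy needs to be rephrased (e.g.\ in terms of whether the set $\{N(X)\}$ is a spread) and then proved. Second, in the degenerate case with $k\geq 2$ your dismissal of the alternative in which the group induced on a spread member contains $\PSL_{k+1}(q)$ (``forces $n=k+1$'') is not an argument, and the classification of flag-transitive collineation groups of $\pg_k(q)$ for $k\geq 3$ also admits $A_7\leq\PGL_4(2)$, which you would need to exclude. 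Similarly, in the symplectic case the union of non-degenerate $2$-spaces through $X$ is $X\cup(V\setminus X^\perp)$, not $V$; the correct objection is that this set is not a subspace, so regularity fails. These are all repairable, but as written the proposal is a strategy with the paper's Sections~4 and~5 left as assertions.
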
 

\begin{corollary}\label{cor:main} 
 Suppose that $\Gamma=(\V,\E)$ is a $k$-regular and $G$-symmetric $q$-graph on $V=\F_q^n$ that is neither complete nor empty. 
 Then one of the following occurs. 
 \begin{enumerate}[$(1)$]
  \item $q=2$, $n$ is even, $k=1$, $G$ acts transitively on $\V$ and $\E$ is a Desarguesian spread of $V$.
  \item $n$ is even, $k=n-2$, $\Sp_n(q) \unlhd G$ and $\E$ is the set of lines of a symplectic polar space, as in Example~\ref{ex:symplecticPolarSpaces}. 
  \item $n=6$, $k=2$, $q$ is even, $G_{2}(q)' \unlhd G$, and $\E$ is the set of lines of a symplectic generalised hexagon, as in Example~\ref{ex:symplecticHexagons}. 
 \end{enumerate} 
\end{corollary}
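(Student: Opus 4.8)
The plan is to obtain the corollary as a refinement of Theorem~\ref{thm:main}. Every $G$-symmetric $q$-graph is $G$-flag-transitive, so Theorem~\ref{thm:main} already places $\Gamma$ in one of the four families listed there; the task is to decide, family by family, whether the stronger requirement of $G$-symmetry --- transitivity on the \emph{ordered} pairs of adjacent vertices, rather than merely on flags --- is compatible with the construction, and to sharpen the description when it is.

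Cases $(3)$ and $(4)$ of Theorem~\ref{thm:main} survive and become cases $(2)$ and $(3)$ of the corollary, so it suffices to check that these constructions are genuinely $G$-symmetric. In case $(3)$, two vertices $X,Y$ are adjacent precisely when $\langle X,Y\rangle$ is a totally isotropic line $E$; since $\Sp_n(q)\unlhd G$, the stabiliser $G_E$ induces on the $q+1$ points of $E$ a group containing a natural $\SL_2(q)$, hence a $2$-transitive group, and this together with $G$-edge-transitivity yields $G$-symmetry (alternatively, Witt's theorem gives transitivity on ordered pairs of independent isotropic vectors directly). In case $(4)$, the collinearity graph of the symplectic generalised hexagon is distance-transitive, so $G$ is transitive on the ordered pairs of collinear --- that is, adjacent --- vertices, which is exactly $G$-symmetry. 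Hence cases $(3)$ and $(4)$ persist.

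Case $(2)$ of Theorem~\ref{thm:main} is eliminated by the stronger hypothesis. There $\Gamma$ is a disjoint union of complete $q$-graphs supported on a family of $3$-spaces partitioning $\V$, with $G$ transitive on this family, and the proof of Theorem~\ref{thm:main} shows that the group that $G$ induces on the $q^2+q+1$ points of each such $3$-space $W$ is a flag-transitive collineation group of $\mathrm{PG}(2,q)$ lying inside the normaliser of a Singer cycle. Since that normaliser has order $3f(q^2+q+1)$ (with $q=p^f$) whereas flag-transitivity needs order at least $(q+1)(q^2+q+1)$, we must have $3f=q+1$; this forces $q\in\{2,8\}$ and makes the induced group sharply flag-transitive, so its point-stabiliser has order $q+1<q^2+q$ and it is not $2$-transitive on the points of $W$. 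But any two distinct vertices lying inside $W$ are adjacent, so $G$-symmetry would force $G_W$ to act $2$-transitively on the points of $W$, a contradiction. Hence case $(2)$ does not occur.

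Case $(1)$ of Theorem~\ref{thm:main}, in which $k=1$ and $\E$ is a line spread of $\mathrm{PG}(n-1,q)$, is where the work lies, and it yields case $(1)$ of the corollary. Here $G$-flag-transitivity coincides with $G$-vertex-transitivity, because the unique edge through a vertex is the spread line containing it. If $\Gamma$ is moreover $G$-symmetric then, fixing a spread line $E$, the stabiliser $G_E$ must act $2$-transitively on the $q+1$ points of $E$. Feeding this constraint back into the analysis underlying Theorem~\ref{thm:main} --- where the classification of transitive linear groups controls both $G$ and the line spread --- forces $\E$ to be equivalent to the Desarguesian line spread and confines the group that $G_E$ induces on $E$ to a subgroup of the Singer normaliser in $\PGaL_2(q)$, of order at most $2f(q+1)$ on the $q+1$ points of $E$ (with $q=p^f$); imposing $2$-transitivity then forces $2f\geq q$, and inspecting the short list of surviving values of $q$ leaves only $q=2$, as in Example~\ref{ex:lineSpeads}. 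The main obstacle is precisely this last case: converting ``$G_E$ is $2$-transitive on a spread line'' into a workable numerical restriction, and then extracting from the transitive-linear-group data both the value $q=2$ and the fact that the spread is Desarguesian. This is also the point at which the dependence on the classification of finite simple groups re-enters, through the classification of transitive linear groups already invoked in the proof of Theorem~\ref{thm:main}.
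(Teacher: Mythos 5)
Your overall strategy is the same as the paper's: reduce to Theorem~\ref{thm:main} via the implication symmetric $\Rightarrow$ flag-transitive, observe that (given flag-transitivity) $G$-symmetry is equivalent to the edge stabiliser $G_U$ acting $2$-transitively on the $q+1$ vertices of $U$, and then test this condition case by case. Your verification of cases (3) and (4), and your elimination of case (2) (via the failure of $2$-transitivity of $G_W^W\leq\GaL_1(q^3)/\F_q^\times$ on the $q^2+q+1$ points of a spread $3$-space $W$, rather than on an edge), are sound.

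The gap is in your treatment of case (1), which is exactly where the argument has to do real work. For the Desarguesian $2$-spread your own bound reads $2f\geq q$ with $q=p^f$, and this is satisfied by $q=4$ as well as $q=2$; the assertion that ``inspecting the short list of surviving values of $q$ leaves only $q=2$'' is not justified, and the order count cannot justify it. Indeed, for $q=4$ the group induced by $\GaL_1(16)$ on the five $\F_4$-points of a Desarguesian spread line is $\F_{16}^\times/\F_4^\times\rtimes\langle x\mapsto x^2\rangle\cong\AGL_1(5)$, a Frobenius group of order $20$ that is \emph{sharply $2$-transitive} on $5$ points. So the case $q=4$ passes the $2$-transitivity test and is not eliminated by your argument; you would need a genuinely different reason to exclude it (and this is also the point at which the paper's own one-line claim that $2$-transitivity in case (1) forces $q=2$ deserves scrutiny). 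Separately, your claim that $2$-transitivity ``forces $\E$ to be equivalent to the Desarguesian line spread'' is asserted rather than proved: for the Hering spreads one has $\SL_2(13)\leq\GL_6(3)$ acting on $91$ lines, so the line stabiliser has order $24$ and induces on the $4$ points of a line a transitive group of order dividing $12$; since $\mathrm{A}_4$ is $2$-transitive of degree $4$, an order count alone does not rule this out, and one must identify the induced group (is it $\mathrm{A}_4$, or a regular group of order $4$?) before discarding the non-Desarguesian spreads. The nearfield spread with $2^{1+4}_-.\mathrm{A}_5\leq\GL_4(3)$ requires the same explicit check. As written, case (1) of your proof does not go through.
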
 

More information about the possibilities for $\E$ in Theorem~\ref{thm:main}(1) is given in Example~\ref{ex:lineSpeads}; in particular, existing results in the literature show that $\E$ necessarily forms a Desarguesian spread, except in the cases $(n,q)=(4,3),\,(6,3)$. The remaining $q$-graphs arising under Theorem~\ref{thm:main} are given in Examples~\ref{ex:spreadsByProjPlanes}--\ref{ex:symplecticHexagons}. The $q$-graphs in Theorem~\ref{thm:main}(3) have been studied as strongly-regular $q$-graphs in \cite{braun2023q}, and the case $q=2$ in Theorem~\ref{thm:main}(4) has been studied as a Deza $q$-graph in \cite{crnkovic2025qdeza}. To the best of our knowledge, the examples appearing in Theorem~\ref{thm:main}(4) with $q>2$ have not previously been studied as $q$-graphs. 

Examples of $q$-graphs that satisfy all but one of the hypotheses of Theorem~\ref{thm:main} are given in Section~\ref{sec:furtherExamples}. In particular, the $q$-graphs constructed in Example~\ref{ex:spreadComplements} are flag-transitive, but not regular; those constructed in Example~\ref{ex:subfields} are regular and vertex-transitive, but (in general) not flag-transitive; and Example~\ref{ex:EdgeTransNotFlagTrans} provides an example of a $q$-graph that is regular and edge-transitive, but not flag-transitive.

The definition of a $q$-graph, with notions such as the degree of a vertex and the definition of the automorphism group, was introduced in 2023 by Braun, Crnkovi\'c, De Boeck, Mikuli\'c Crnkovi\'c and \v{S}vob~\cite{braun2023q}. Their main result was the classification of $q$-analogues of strongly regular graphs. Since then, regular $q$-graphs have been studied in several other papers. Crnkovi\'c, De Boeck, Pavese and \v{S}vob constructed $q$-analogues of Deza graphs and characterised the smallest non-strongly-regular examples \cite{crnkovic2025qdeza}; Crnkovi\'c, Mikuli\'c Crnkovi\'c, \v{S}vob and Zubovi\'c \v{Z}utolija proposed a method of constructing vertex-transitive regular $q$-graphs \cite{crnkovic2025trans}. Buratti, Naki\'c and Wasserman have studied graph decompositions in projective geometries, a topic related to the $q$-analogues of group divisible designs \cite{buratti2,buratti}. Our contribution to the study of $q$-graphs is the classification of $q$-graphs under the symmetry conditions of Definition~\ref{def:sym}.

\subsection{Groups acting on classical graphs}

Before moving on, we will briefly compare and connect our results with those of groups acting on graphs in the classical setting. Recall that a (classical) graph is called \emph{$G$-symmetric} (or simply \emph{symmetric}) if it admits a group $G$ of automorphisms that acts transitively on the set of all ordered pairs of adjacent vertices. In the classical setting, this condition is also called \emph{$G$-arc-transitive}, or simply \emph{arc-transitive}. The class of arc-transitive graphs is too vast to be classified. Major results in the area typically impose additional conditions, e.g. on the action of the arc-stabiliser of $G$ \cite{potovcnik2014order}, or on the valency \cite{conder2009refined}. A graph is \emph{$2$-arc-transitive} if it admits a group of automorphisms acting transitively on the triples $(u,v,w)$ of vertices such that $u$ is adjacent to $v$ and $v$ is adjacent to $w$, but $u\neq w$. While much progress has been made in understanding this class of graphs in terms of `normal quotients' and `basic' $2$-arc-transitive graphs, classification is thought to be impossible even in this case \cite{ivanov1993finite, praeger1993nan}. 

Considering the vastness of the family of regular, symmetric graphs, one might expect that the family of symmetric (or flag-transitive) $q$-graphs might also elude classification. From this perspective, Theorem~\ref{thm:main} and Corollary~\ref{cor:main} might seem surprising. However, while transitive groups on a finite set of size $n$ contain the regular actions of all finite groups of order $n$, and hence are not reasonably classifiable, the class of transitive linear groups is small, see Theorem \ref{thm:linclass}. Also, the automorphism groups of $q$-analogues of combinatorial objects tend to be much `smaller' than that of their classical counterparts, and hence conditions that may seem mild in the classical literature can be  restrictive for $q$-analogues (see, for instance, \cite{hawtin2022non}).

\subsection{Connection to classical graphs and geometries}

Given a $q$-graph $\Gamma=(\V,\E)$ on $V=\F_q^n$, define its classical counterpart $\Gamma_{C}=(\V_{C},\E_{C})$, where $\V_{C}=\V$ and $\E_{C}=\{\{u,v\}\in\V\times\V\mid \langle u,v\rangle\in\E\}$.  Observing that $\E_C$ is precisely the set of all pairs of adjacent vertices, we deduce the following result, which shows that the $q$-graphs in Corollary~\ref{cor:main} have classical counterparts that are symmetric graphs. 

\begin{corollary}\label{cor:classicalSymmetric}
 If $\Gamma$ is a $G$-symmetric $q$-graph, then its classical counterpart is a $G$-symmetric graph.
\end{corollary}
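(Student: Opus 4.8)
The plan is to unwind the two definitions of symmetry and observe that, under the correspondence $\{u,v\}\mapsto\langle u,v\rangle$, the arc set of $\Gamma_C$ is literally the set on which $G$ is assumed to act transitively. First I would check that $G$ acts as a group of automorphisms of the graph $\Gamma_C$. Since $G\leq\Aut(\Gamma)\leq\GaL_n(q)$, the group $G$ acts on the set $\V=\V_C$ of $1$-dimensional subspaces of $V$, and this action is compatible with taking spans: for $g\in G$ and $u,v\in\V$ we have $g\langle u,v\rangle=\langle gu,gv\rangle$, because $g$ acts semilinearly. Hence $\{u,v\}\in\E_C$ if and only if $\langle u,v\rangle\in\E$, if and only if $\langle gu,gv\rangle\in\E$ (using that $g$ preserves $\E$), if and only if $\{gu,gv\}\in\E_C$; so $g$ is an automorphism of $\Gamma_C$, and therefore $G\leq\Aut(\Gamma_C)$.

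Next I would identify the set of arcs of $\Gamma_C$, that is, the set of ordered pairs of adjacent vertices. By definition this is $\{(u,v)\in\V\times\V\mid \{u,v\}\in\E_C\}$, equivalently the set of $(u,v)$ with $u\neq v$ and $\langle u,v\rangle\in\E$. But if $\langle u,v\rangle\in\E$ then $\langle u,v\rangle$ is $2$-dimensional, which already forces $u\neq v$; so the arc set of $\Gamma_C$ equals $\{(u,v)\in\V\times\V\mid \langle u,v\rangle\in\E\}$. This is precisely the set appearing in Definition~\ref{def:sym}(2), on which the hypothesis that $\Gamma$ is $G$-symmetric guarantees that $G$ acts transitively. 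Combining this with the previous paragraph, $G$ is a group of automorphisms of $\Gamma_C$ acting transitively on its arcs, i.e.\ $\Gamma_C$ is a $G$-symmetric graph, as required.

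There is essentially no obstacle: the content is the remark that Definition~\ref{def:sym}(2) was set up so as to restrict, along $\{u,v\}\mapsto\langle u,v\rangle$, to arc-transitivity of $\Gamma_C$. The only points needing (trivial) verification are the $G$-equivariance of the span map, immediate from semilinearity, and the observation that $\langle u,v\rangle\in\E$ cannot hold when $u=v$, so that no spurious loops need to be excluded when passing between the two arc sets. (The degenerate case $\E=\varnothing$ is handled vacuously.)
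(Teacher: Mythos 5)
Your proof is correct and matches the paper's approach: the paper gives no formal proof, simply observing that $\E_C$ is by construction the set of pairs of adjacent vertices so that the statement ``follows directly from the definitions,'' which is exactly the unwinding you carry out (with the minor extra checks of $G$-equivariance of the span map and the impossibility of loops). Nothing is missing.
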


While the above result follows directly from the definitions, the result below does not appear to have a straightforward proof, and relies on Theorem~\ref{thm:main}. The proof is given in Section~\ref{sec:mainproof}.

\begin{proposition}\label{prop:CCsymmetric}
 If $\Gamma$ is a regular, flag-transitive $q$-graph, then its classical counterpart is a symmetric graph.
\end{proposition}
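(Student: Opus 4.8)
The plan is to read off the structure of $\Gamma$ from Theorem~\ref{thm:main} and verify the conclusion family by family, keeping in mind throughout that the automorphism group of the ordinary graph $\Gamma_C$ can be strictly larger than $\Aut(\Gamma)\leq\GaL_n(q)$, and that this extra room is precisely what makes the statement hold even for those $q$-graphs that are flag-transitive but not symmetric in the sense of Definition~\ref{def:sym}. If $\Gamma$ is the complete $q$-graph then $\Gamma_C$ is a complete graph, and if $\Gamma$ is empty then $\Gamma_C$ has no edges; in either case $\Gamma_C$ is trivially symmetric. Otherwise $\Gamma$ is $\Aut(\Gamma)$-flag-transitive and regular, so Theorem~\ref{thm:main} applies and places $\Gamma$ in one of its four families.

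In cases~(1) and~(2) of Theorem~\ref{thm:main}, the set $\V$ is partitioned into the point sets of a family of subspaces of a fixed dimension ($2$ when $k=1$ and $3$ when $k=2$), with $\E$ consisting of all $2$-subspaces of the members of the family; two vertices are then adjacent in $\Gamma_C$ exactly when they lie in a common member. Hence $\Gamma_C$ is a disjoint union of $t$ copies of a complete graph $K_m$, where $m=q+1$ or $m=q^2+q+1$ according to the case and $t=|\V|/m$. Any such graph is symmetric: the wreath product $\Sym(m)\wr\Sym(t)$ embeds in its automorphism group and acts transitively on the set of ordered pairs of adjacent vertices. This disposes of the two families in which $\Gamma$ itself need not be symmetric as a $q$-graph (for instance, by Corollary~\ref{cor:main}, the line-spread examples of Theorem~\ref{thm:main}(1) with $q>2$).

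In cases~(3) and~(4), $\Aut(\Gamma)$ contains $\Sp_n(q)$, respectively $G_{2}(q)$, and $\E$ is the line set of a symplectic polar space, respectively of the split Cayley hexagon. Here I would show that this subgroup already acts transitively on the arcs of $\Gamma_C$, that is, on the ordered pairs of distinct collinear points of the geometry. For the symplectic polar space this follows from Witt's lemma: $\Sp_n(q)$ is transitive on totally isotropic $2$-spaces, and the setwise stabiliser of such a $2$-space induces $\GL_2(q)$ on it, which is $2$-transitive on its $q+1$ points. For the split Cayley hexagon one invokes that its incidence graph is distance-transitive under $G_{2}(q)$, so that pairs of collinear points form a single orbit; as collinear points are precisely adjacent vertices of $\Gamma_C$, this yields arc-transitivity. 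Equivalently, these are exactly the two families of Corollary~\ref{cor:main}, so $\Gamma$ is already symmetric as a $q$-graph and Corollary~\ref{cor:classicalSymmetric} applies directly.

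I expect the only genuinely geometric step to be the claim used in case~(4), namely that $G_{2}(q)$ is transitive on the ordered pairs of collinear points of the split Cayley hexagon; everything else reduces either to a direct appeal to Theorem~\ref{thm:main} or to the elementary fact that a disjoint union of equal-sized complete graphs is symmetric. The one conceptual point worth emphasising is that, for $q$-graphs, flag-transitivity does not imply symmetry, yet passing to the classical counterpart enlarges the available automorphism group enough to absorb exactly this gap.
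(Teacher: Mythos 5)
Your proposal is correct and follows essentially the same route as the paper: invoke Theorem~\ref{thm:main}, observe that in cases (1) and (2) the classical counterpart is a disjoint union of equal-sized cliques (hence symmetric), and in cases (3) and (4) note that $\Gamma$ is already symmetric by Corollary~\ref{cor:main}, so Corollary~\ref{cor:classicalSymmetric} applies. The supplementary direct verifications via Witt's lemma and distance-transitivity of the hexagon are sound but unnecessary given the fallback to Corollary~\ref{cor:main} that you already provide.
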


Note that while the above proposition shows that every flag-transitive $q$-graph $\Gamma$ has a symmetric classical counterpart $\Gamma_C$, the following example shows that the proposition is false if we insist upon only using automorphisms of $\Gamma_C$ that arise from elements of $\Aut(\Gamma)$. That is, there exist $G$-flag-transitive $q$-graphs $\Gamma$ such that $\Gamma_C$ is not $G$-symmetric.

\begin{example}
 If $\Gamma$ is as in Example~\ref{ex:spreadsByProjPlanes} with $q=2$ and $G=\GaL_1(2^n)$, then $\Gamma_C$ is not $G$-symmetric, since the faithful action of the stabiliser of a vertex $X$ on its neighbourhood has order $3$, while $N(X)$ contains $6$ vertices, not including $X$ itself. However, since $\Gamma_C$ is a union of cliques, it is $\Aut(\Gamma_C)$-symmetric.
\end{example}

Recall that a \emph{partial linear space} is an incidence structure in which any pair of points is contained in $0$ or $1$ lines. Partial linear spaces with a high degree of symmetry have been studied by Devillers \cite{devillers2000d}, and those admitting a rank $3$ group of automorphisms have been studied by Bamberg \cite{bamberg2021partial}. A partial linear space admitting a rank $3$ automorphism group is both flag-transitive and antiflag-transitive. Since distinct vertices in a $q$-graph $\Gamma$ span a unique $2$-space, which either is or is not an edge of $\Gamma$, the graphs of Theorem~\ref{thm:main} provide examples of flag-transitive partial linear spaces.

\begin{proposition}
 Let $\Gamma=(\V,\E)$ be a $G$-flag-transitive $q$-graph. Then $\P=(\V,\E,{\rm I})$ is a $G$-flag-transitive partial linear space, where the incidence relation ${\rm I}$ is given by symmetrised containment.
\end{proposition}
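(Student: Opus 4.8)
The plan is to verify the three defining properties directly from the definitions, since no deep structural input is needed here; the statement is essentially a dictionary entry translating between the language of $q$-graphs and that of incidence geometry. First I would check that $\P$ is a partial linear space. Given two distinct vertices $X,Y\in\V$, their span $W=\langle X,Y\rangle$ is the unique $2$-dimensional subspace of $V$ containing both $X$ and $Y$; hence any edge of $\P$ incident with both $X$ and $Y$ must coincide with $W$. So any pair of points lies on $0$ or $1$ lines, which is exactly the partial-linear-space axiom. One may also note in passing that every line $U\in\E$, being a $2$-space, is incident with exactly $q+1\geq 3$ vertices, so $\P$ has no degenerate lines.

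Second I would check that $G$ acts on $\P$ by automorphisms. By hypothesis $G\leq\Aut(\Gamma)\leq\GaL_n(q)$, so $G$ permutes the set $\V$ of all $1$-dimensional subspaces of $V$, preserves $\E$ by the definition of $\Aut(\Gamma)$, and preserves subspace containment because semilinear bijections send $k$-spaces to $k$-spaces and respect inclusion. Therefore $G$ preserves the symmetrised incidence relation ${\rm I}$, that is, $G\leq\Aut(\P)$.

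Finally, I would observe that a flag of the incidence structure $\P$ is by definition an incident point–line pair $(X,U)$ with $X\in\V$, $U\in\E$ and $X\,{\rm I}\,U$, which by the definition of ${\rm I}$ means precisely $X\leq U$; this is exactly what the excerpt calls a flag of the $q$-graph $\Gamma$. Since $\Gamma$ is $G$-flag-transitive, $G$ acts transitively on these pairs, and hence $\P$ is $G$-flag-transitive.

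I do not anticipate a genuine obstacle here: the only points that require any care are confirming that the two notions of \emph{flag} coincide under the identification of the edge set $\E$ with the line set of $\P$, and that inclusion of subspaces is respected by all of $\GaL_n(q)$; both are immediate. The statement serves mainly to record that the $q$-graphs classified in Theorem~\ref{thm:main} yield flag-transitive partial linear spaces.
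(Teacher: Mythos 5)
Your argument is correct and matches the paper's (implicit) justification: the paper states the proposition without a formal proof, relying on exactly the observation you make, namely that two distinct vertices span a unique $2$-space, so any two points of $\P$ lie on at most one line, and the flags of $\Gamma$ and of $\P$ coincide. Your write-up simply makes explicit the routine checks the paper leaves to the reader.
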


\subsection{Outline of the paper}

Section~\ref{sec:pre} contains preliminaries, most importantly a statement of the classification of transitive linear groups. In Section~\ref{sec:examples}, we construct the $q$-graphs arising under Theorem~\ref{thm:main} and Corollary~\ref{cor:main}, as well as several $q$-graphs satisfying weaker hypotheses than those of Theorem~\ref{thm:main}. Section~\ref{sec:semi} contains the analysis of one-dimensional semi-linear actions, upon which results in later sections rely, and Section~\ref{sec:insol} treats the transitive linear groups of Lie type. The proofs of the main results are then given in Section~\ref{sec:mainproof}.

\section{Preliminaries}\label{sec:pre}

Let $G$ be a group acting on a set $\Omega$ and let $\Delta\subseteq\Omega$. We denote the action of $g \in G$ on $\alpha \in \Omega$ by $\alpha^g$. For $\alpha\in\Omega$ we write $G_\alpha$ for the stabiliser of $\alpha$, and we write $G_\Delta$ for the setwise stabiliser of $\Delta$, inside $G$. If $H\leq G_\Delta$, then we write $H^\Delta$ for the \emph{faithful} action of $H$ on $\Delta$, so that $H^\Delta\leq\Sym(\Delta)$ and $H^\Delta\cong H/K$, where $K$ is the kernel of the action of $H$ on $\Delta$. For more on permutation groups see, for instance, Dixon \cite{dixon1996permutation}. For more information about the construction of particular groups used throughout the paper, see Wilson \cite{wilson2009finite}. 

Let $\Gamma = (\mathcal{V}, \mathcal{E})$ be a $q$-graph. For any vertex $X \in \mathcal{V}$, the (closed) \emph{neighbourhood} $N(X)$ is defined to be  $\bigcup\{Y\in \E\mid X\leq Y\}$. Note that the elements of $N(X)$ are vectors, rather than vertices as in the original definition, cf. \cite[Definition~3.3]{braun2023q}.  Because we frequently make use of the fact that the neighbourhoods in regular $q$-graphs are vector spaces, we take their elements to be vectors and consider the vertices and edges contained in a neighbourhood to be contained as subspaces.

When $q = p^{t}$, the Galois group of $\mathbb{F}_{q}$ over $\mathbb{F}_{p}$ is cyclic of order $t$, generated by the map $\sigma: x \mapsto x^{p}$. The group $\GaL_{n}(\mathbb{F}_q)$ is defined to be the semi-direct product with normal subgroup $\GL_{n}(\mathbb{F}_q)$ and complement $\langle \sigma\rangle$, acting entrywise on matrices. The group $\GaL_{n}(\mathbb{F}_{q})$ has an induced action on the $k$-dimensional subspaces of $\mathbb{F}_{q}^{n}$, and the automorphism group of $\Gamma$ is defined to be the setwise stabiliser of $\mathcal{E}$. 

A \emph{$k$-spread} $\S$ of $V=\F_q^n$ is a set of $k$-dimensional subspaces of $V$ such that every non-zero vector is contained in precisely one element of $\S$. It is well known that a $k$-spread of $V$ exists if and only if $k$ divides $n$ \cite{segre}. 

\begin{definition}
 Let $k$ be a divisor of $n$, let $F=\F_q$, and let $K=\F_{q^k}$, so that $K/F$ is a field extension of degree $k$. Let $U = K^{n/k}$ and $V=F^n$. Then, as $F$-vector spaces, $U$ and $V$ are isomorphic. Fixing such an isomorphism, the set of images in $V$ of all the one-dimensional $K$-subspaces of $U$ forms a $k$-spread in $V$. Such spreads are called \emph{regular} or \emph{Desarguesian}.
\end{definition}

A group of automorphisms of a regular $k$-spread $\S$ of $V$, must preserve the $\F_{q^k}$-vector spaces of $U$. It follows that the stabiliser of $\S$ in $\GaL_n(q)$ is $\GaL_{n/k}(q^k)$. 
Conversely, given a copy of $\GL_{n/k}(q^k)$ embedded in $\GL_{n}(q)$ acting on $V'=\F_q^n$, the set of orbits of the centre of $\GL_{n/k}(q^k)$ on $V'\setminus\{0\}$ correspond to the elements of a regular $k$-spread.

A \emph{(linear) representation} of a group $G$ on a vector space $V$ over a field $F$ is a homomorphism $G\ra GL(V)$, and is equivalent to saying that $G$ acts on $V$ and preserves the $F$-vector space structure of $V$. Equivalently, a representation of $G$ on $V$ induces the structure of an $FG$-module on $V$. An $FG$-module $V$ is irreducible if and only if the only $G$-invariant subspaces are $0$ and $V$. The following lemma is crucial in the proofs in later sections; given an action of the group $G$ it provides strong restrictions on the possibilities for $N(X)/X$ as a $G_{X}$-module.

\begin{lemma}\label{lem:transOnQuotientSpace}
Let $\Gamma$ be a $k$-regular, $G$-flag-transitive $q$-graph on $V=\F_{q}^n$, and let $X\in\V$. Then the neighbourhood $N(X)$ is an $\F_q G_X$-module, where $G_X$ is the stabiliser of $X$ in $G$. Moreover, $G_X$ induces a transitive action on the set of $1$-dimensional subspaces of the quotient $N(X)/X$, and hence $N(X)/X$ is an irreducible $\F_qG_X$-module. 
\end{lemma}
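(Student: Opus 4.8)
The plan is to unwind the definitions so that the three assertions become, in turn, a statement about the neighbourhood as a subspace, a transfer of flag-transitivity to the point stabiliser, and a standard spanning argument.

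First I would pin down $N(X)$ using $k$-regularity. By definition of $k$-regularity there is a $(k+1)$-dimensional subspace $W$ with $X\leq W$ such that the edges through $X$ are exactly the $2$-dimensional subspaces $U$ with $X\leq U\leq W$. Taking the union of these $2$-spaces recovers all of $W$: a vector $w\in W$ either lies in $X$, hence in every such $U$, or else $\langle X,w\rangle$ is one of them. Thus $N(X)=W$ is an $\F_q$-subspace of dimension $k+1$. Since every element of $G_X$ fixes $X$ and preserves $\E$, it permutes the set of edges through $X$ and therefore stabilises their union $N(X)$ setwise; as elements of $\GaL_n(q)$ act additively, $N(X)$ is a $G_X$-invariant $\F_q$-module, and, $X$ being $G_X$-invariant, the quotient $N(X)/X$ inherits the structure of an $\F_q G_X$-module of dimension $k$.

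Next I would transfer the hypothesis. There is a bijection between the $1$-dimensional subspaces of $N(X)/X$ and the edges through $X$: a $1$-space of $N(X)/X$ has the form $U/X$ for a unique $2$-space $U$ with $X\leq U\leq N(X)=W$, and by $k$-regularity every such $U$ is an edge; this bijection is plainly $G_X$-equivariant. Now $G$-flag-transitivity means $G$ is transitive on $\{(Y,U):Y\leq U\in\E\}$, and restricting to flags whose first coordinate is $X$ gives, by the usual orbit argument, that $G_X$ is transitive on $\{U\in\E:X\leq U\}$. Transporting this along the equivariant bijection yields that $G_X$ acts transitively on the $1$-dimensional subspaces of $N(X)/X$.

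Finally, for irreducibility, let $M$ be a nonzero $G_X$-invariant subspace of $N(X)/X$ and choose a $1$-dimensional subspace $\ell\leq M$. By the transitivity just established, every $1$-dimensional subspace of $N(X)/X$ lies in the $G_X$-orbit of $\ell$, hence in $M$; since the $1$-spaces span $N(X)/X$, we get $M=N(X)/X$. I do not expect a genuine obstacle here: the argument is essentially definition-chasing, and the one place that needs care is checking that the correspondence between edges through $X$ and $1$-spaces of $N(X)/X$ is onto — which uses the full strength of $k$-regularity, not merely that the edges through $X$ all lie inside $W$ — together with the mild point that the $G_X$-action on $N(X)$ is semilinear rather than linear.
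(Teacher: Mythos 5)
Your argument is correct and follows essentially the same route as the paper's proof: $k$-regularity makes $N(X)$ a $(k+1)$-dimensional subspace, $G_X$ preserves it, flag-transitivity restricted to flags through $X$ gives transitivity of $G_X$ on the edges through $X$, and the $G_X$-equivariant bijection with the $1$-spaces of $N(X)/X$ yields irreducibility. The extra care you take over the edge--$1$-space correspondence and the semilinearity of the action is sound but not a departure from the paper's (more terse) argument.
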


\begin{proof}
 Since $\Gamma$ is $k$-regular, we have that $N(X)$ is a subspace of $V$. Moreover, $G_X$ leaves $N(X)$ invariant, and hence $N(X)$ is an $\F_q G_X$-module. Since $\Gamma$ is flag-transitive, $G_X$ acts transitively on the set of $\frac{q^{k}-1}{q-1}$ distinct $2$-dimensional subspaces of $N(X)$ containing $X$, and hence induces a transitive action on the $1$-spaces of $N(X)/X$. This implies that there is no proper subspace of $N(X)/X$ preserved by $G_X$, and hence that $N(X)/X$ is irreducible as an $\F_q G_X$-module. 
\end{proof}

The next lemma considers the complete $q$-graph, and when it is flag-transitive under a one-dimensional semi-linear group. The result relies on the classification of groups acting transitively on the set of incident point-line pairs of a projective plane, see \cite{higman1962flag} or \cite[Theorem~1]{higman1961geometric}.

\begin{lemma}\label{lem:completeSemiLinear}
 Let $n\geq 3$ and let $\Gamma=(\V,\E)$ be the complete $q$-graph on the $n$-dimensional vector-space $V=\F_{q^n}$ over $\F_q$ and let $G\leq\GaL_1(q^n)$. Then $\Gamma$ is $G$-flag-transitive if and only if $n=3$, $q\in\{2,8\}$ and $G=\GaL_1(q^3)$.
\end{lemma}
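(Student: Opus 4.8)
The plan is to reinterpret flag-transitivity of the complete $q$-graph as flag-transitivity of a projective geometry and then exploit the very restricted structure of one-dimensional semilinear groups. Since $\E$ is the set of \emph{all} $2$-spaces of $V=\F_{q^n}$, a flag $(X,U)$ of $\Gamma$ is exactly an incident point--line pair of $\pg(n-1,q)$, so $\Gamma$ is $G$-flag-transitive precisely when $G\le\GaL_1(q^n)$ acts transitively on the flags of $\pg(n-1,q)$. Such a $G$ is in particular point-transitive, so $\frac{q^n-1}{q-1}$ divides $|G|$, and the point stabiliser $G_X$ acts transitively on the $\frac{q^{n-1}-1}{q-1}$ lines through $X$.

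First I would determine the point stabiliser. Write $q=p^f$ and identify $\GaL_1(q^n)=S\rtimes\langle\phi\rangle$, where $S=\F_{q^n}^\times$ is a Singer cycle and $\phi\colon x\mapsto x^p$ has order $nf$. Taking $X=\F_q$ (the subfield of order $q$, as a $1$-space over $\F_q$), every field automorphism of $\F_{q^n}$ stabilises $\F_q$ setwise, and the only scalars fixing the $1$-space $\F_q$ lie in $\F_q^\times$; hence $\GaL_1(q^n)_X=\F_q^\times\rtimes\langle\phi\rangle$. The crucial point is that $\F_q^\times$ fixes \emph{every} line of $\pg(n-1,q)$, so the action of $G_X$ on the lines through $X$ is a quotient of $\langle\phi\rangle$, in particular cyclic of order dividing $nf$. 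A cyclic group acts transitively on a set of size $m$ only if $m$ divides its order, so $\frac{q^{n-1}-1}{q-1}$ divides $nf$.

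Now an elementary estimate finishes the ``only if'' direction. Since $\frac{q^{n-1}-1}{q-1}>q^{n-2}\ge 2^{(n-2)f}$, the divisibility relation forces $\frac{q^{n-1}-1}{q-1}\le nf$, which fails for every $n\ge 4$ (check the finitely many $(n,f)$ with $n-2$ and $f$ small, then use monotonicity), so $n=3$. For $n=3$ the constraint is $(q+1)\mid 3f$ with $q=p^f$; the inequality $q+1\le 3f$ already forces $q\in\{2,4,8\}$, and of these only $q\in\{2,8\}$ satisfy $(q+1)\mid 3f$. In both surviving cases the argument moreover shows that $G_X$ must induce all of $\langle\phi\rangle$ (order $3$ if $q=2$, order $9$ if $q=8$) on the lines through $X$; together with point-transitivity and $G_X\le\F_q^\times\rtimes\langle\phi\rangle$ this identifies $G$, trivially so for $q=2$ since $|\GaL_1(8)|=21$ is exactly the number of flags of the Fano plane.

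For the ``if'' direction I would verify directly that $\GaL_1(q^3)$ is flag-transitive on $\pg(2,q)$ for $q\in\{2,8\}$: it is point-transitive via the Singer cycle $S$, and for $X=\F_q$ the group $\langle\phi\rangle$ acts on the $q+1$ lines through $X$ as the Frobenius of $\F_{q^3}/\F_q$ acts on the $1$-spaces of the $2$-dimensional $\F_q$-space $\F_{q^3}/\F_q$; this Frobenius has no invariant $1$-space because its minimal polynomial $x^2+x+1$ is irreducible over $\F_q$ when $3\nmid q-1$, and a fixed-point-free power of order $3$ forces the order-$nf$ cyclic group to act regularly. I expect the main technical point to be exactly this last verification for $q=8$, i.e.\ checking that $\phi$ (of order $9$) acts as a single $9$-cycle rather than with shorter cycles on the $9$ lines through $X$; the irreducibility of $\phi^3=\mathrm{Frob}_{\F_8}$ on $\F_{512}/\F_8$ is what guarantees this. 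Alternatively, the list $\{2,8\}$ together with the identification of $G$ can be extracted from the classification of flag-transitive subgroups of $\PGaL_1$ acting on a Desarguesian projective plane (Higman--McLaughlin), which is the route indicated in the text.
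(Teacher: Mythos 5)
Your reduction of flag-transitivity of the complete $q$-graph to flag-transitivity on $\pg(n-1,q)$ is the same starting point as the paper, but from there you take a genuinely different and more self-contained route. The paper disposes of $n\geq 4$ by comparing the total number of flags with $|\GaL_1(q^n)|$, and then invokes Higman--McLaughlin for $n=3$. Your key observation --- that $\GaL_1(q^n)_{\F_q}=\F_q^\times\rtimes\langle\phi\rangle$ and that $\F_q^\times$ fixes every line through $X$, so the line-action of $G_X$ factors through the cyclic group $\langle\phi\rangle$ of order $nf$, forcing $\frac{q^{n-1}-1}{q-1}\mid nf$ --- is correct and sharper. In fact it is strictly stronger than the paper's order count: for $(n,q)=(4,4)$ the number of flags is $85\cdot 21=1785$ while $|\GaL_1(2^8)|=2040$, so the paper's inequality ``number of flags $>|\GaL_1(q^n)|$'' actually fails there (and for $(4,8)$), whereas your divisibility condition $21\mid 8$ kills it immediately. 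Your elimination of $q=4$ when $n=3$, and your direct verification of the ``if'' direction via the fixed-point-freeness of $\mathrm{Frob}_q$ on the lines through $\F_q$ (irreducibility of $x^2+x+1$ over $\F_q$ when $3\nmid q-1$, hence regularity of $\langle\phi\rangle$ on the $q+1$ lines), are also correct.

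The gap is in the last step, ``this identifies $G$.'' For $q=2$ the order count $|\GaL_1(8)|=21$ does force $G=\GaL_1(8)$. For $q=8$, however, the conditions you have derived --- point-transitivity, $G_X\leq\F_8^\times\rtimes\langle\phi\rangle$, and $G_X$ inducing all of $\langle\phi\rangle\cong\Z/9\Z$ on the nine lines through $X$ --- do \emph{not} pin down $G$. Writing $\F_{512}^\times=\langle\omega\rangle$, the subgroup $H=\langle\omega^7\rangle\rtimes\langle\phi\rangle$ of order $73\cdot 9=657$ satisfies all of them: $\langle\omega^7\rangle$ meets the scalar kernel $\F_8^\times=\langle\omega^{73}\rangle$ trivially and is already transitive on the $73$ points, $H_X=\langle\phi\rangle$, and your own fixed-point-free argument shows $\langle\phi\rangle$ is regular on the nine lines through $X$. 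So $H$ is a (sharply) flag-transitive proper subgroup of $\GaL_1(8^3)$, of index $7$. Consequently your argument cannot conclude $G=\GaL_1(q^3)$ for $q=8$ --- and indeed that clause of the statement appears to be too strong as written; note that the paper's own justification, that ``$|\GaL_1(q^3)|$ is equal to the number of flags,'' holds for $q=2$ ($21=21$) but not for $q=8$ ($4599=7\cdot 657$). Everything you prove about $(n,q)$ and about the flag-transitivity of $\GaL_1(q^3)$ itself stands; you should either weaken the conclusion to ``$\F_q^\times\cdot G=\GaL_1(q^3)$ modulo scalars'' (equivalently, $G$ surjects onto the normaliser of the Singer cycle in the plane), or flag the $q=8$ uniqueness claim as unprovable by these means.
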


\begin{proof}
 The set $\V$ forms the point-set of a projective space $\pg_{n-1}(q)$, and $\E$ forms the set of lines of $\pg_{n-1}(q)$. Thus, the set of flags of $\Gamma$ is precisely the set of point-line flags of $\pg_{n-1}(q)$. Let $q=p^t$, where $p$ is prime. If $n\geq 4$, then the number $(q^{n-1}+\cdots+ q+1)(q^{n-2}+\cdots +q+1)$ of flags of $\Gamma$ is strictly greater than $|\GaL_1(q^n)|=nt(p^{nt}-1)$, and hence $G$ is not flag-transitive on $\Gamma$. Thus we may assume that $n=3$. The result then follows from \cite[Theorem~1]{higman1961geometric}, and the fact that in these cases $|\GaL_1(q^3)|$ is equal to the number of flags of $\Gamma$.
\end{proof}

\subsection{Flag-transitive \texorpdfstring{$q$}{q}-graphs of dimension at most four} 

If a $q$-graph is $k$-regular, then the (closed) neighbourhood of a vertex forms a $(k+1)$-dimensional subspace, and thus there are no non-trivial regular $q$-graphs when $\dim(V) \leq 2$, and a graph in which neighbourhoods are of dimension $k+1 = 2$ is necessarily a disjoint union of edges. The next lemma carries this analysis a little further, and allows us to avoid the classification of transitive linear groups in dimensions $2,3,4$. 

\begin{lemma}\label{lem:smalldim}
 Let $V=\F_q^n$ and let $\Gamma=(\V,\E)$ be a non-trivial $k$-regular and vertex-transitive $q$-graph. Then at least one of the following holds.
 \begin{enumerate}[$(1)$]
  \item $n=4$ and $\E$ is the set of lines in a spread of $\pg_3(q)$, i.e. $\E$ is a partition of $\V$ into disjoint subspaces.
  \item $n=4$ and $\E$ is the set of lines of the symplectic generalised quadrangle $\W(q)$.
  \item $n\geq 5$.
 \end{enumerate}
\end{lemma}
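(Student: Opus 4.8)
The plan is to split into the ranges $n \leq 3$, $n = 4$, and $n \geq 5$; the last is immediately case (3), for $n \leq 3$ I would show no non-trivial $k$-regular $q$-graph exists, and for $n = 4$ I would identify $\E$ directly by a geometric argument in $\pg_3(q)$.

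For $n \leq 2$ the only available $2$-space is $V$ itself, so $\Gamma$ is complete or empty, hence trivial. For $n = 3$ the neighbourhood $N(X)$ has dimension $k+1 \in \{1,2,3\}$: if $k = 0$ then $\Gamma$ is empty; if $k = 2$ then $N(X) = V$ for all $X$ and $\Gamma$ is complete; and if $k = 1$ then every vertex lies in exactly one edge, so $\E$ would partition the $q^2+q+1$ vertices into $2$-spaces of $q+1$ vertices each, which is impossible since $q+1$ does not divide $q^2+q+1$. So we may assume $n \geq 4$; if $n \geq 5$ we are in case (3), so it remains to treat $n = 4$.

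Let $n = 4$. As before $k \in \{0,3\}$ gives the empty or complete $q$-graph, so assume $k \in \{1,2\}$. If $k = 1$, each vertex lies in exactly one edge, so $\E$ is a set of $2$-spaces partitioning $\V$ and meeting pairwise trivially; that is, $\E$ is the line set of a spread of $\pg_3(q)$, which is case (1). Now suppose $k = 2$, so each $N(X)$ is a plane of $\pg_3(q)$ containing the point $X$, and the edges through $X$ are exactly the lines of $N(X)$ through $X$. Write $\phi \colon X \mapsto N(X)$ for the resulting map from points to planes. Three properties of $\phi$ drive the argument:
\begin{enumerate}[$(a)$]
 \item $\phi$ is \emph{symmetric}: if $Y \subseteq N(X)$ then $\langle X, Y\rangle$ is a line of the plane $N(X)$ through $X$, hence an edge, hence an edge through $Y$, hence contained in $N(Y)$, so $X \subseteq N(Y)$; the converse is identical.
 \item $\phi$ is \emph{injective}: if $N(P) = N(Q) =: \pi$ with $P \neq Q$, then applying $(a)$ repeatedly gives $N(R) = \pi$ for every point $R$ of $\pi$; but then for any point $S$ outside $\pi$, choosing $R$ on the line $N(S) \cap \pi$ gives $S \subseteq N(R) = \pi$ by $(a)$, a contradiction.
 \item \emph{collinearity is respected}: if $X, Y, Z$ are distinct collinear points and $W \subseteq N(X) \cap N(Y)$, then $X, Y \subseteq N(W)$ by $(a)$, so $\langle X, Y\rangle \subseteq N(W)$, so $Z \subseteq N(W)$, so $W \subseteq N(Z)$; hence $N(X) \cap N(Y) \subseteq N(Z)$.
\end{enumerate}
From $(b)$ and the fact that $\pg_3(q)$ has equally many points and planes, $\phi$ is a bijection; from $(c)$ it sends a line $\langle X, Y\rangle$ to the line $N(X) \cap N(Y)$ independently of the points chosen, and dually on planes, and one checks routinely that the resulting map on the subspace lattice is inclusion-reversing and involutive, i.e. a polarity of $\pg_3(q)$. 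Since $X \subseteq N(X)$ for every vertex $X$, every point is absolute for this polarity, which forces it to be symplectic (in every characteristic: a non-degenerate reflexive sesquilinear form all of whose vectors are isotropic is alternating). Its totally isotropic lines are precisely the edges of $\Gamma$, so $\E$ is the line set of $\W(q)$, which is case (2).

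The main obstacle is the case $k = 2$, $n = 4$, and within it the verification that $\phi$ really assembles into a polarity: one needs injectivity $(b)$ and the collinearity property $(c)$ in hand before invoking the classification of polarities, and one must phrase the ``all points absolute $\Rightarrow$ symplectic'' step carefully so that characteristic $2$ is covered, where symmetric non-alternating bilinear forms also define polarities but never with every point absolute. Note that vertex-transitivity is not in fact used in the argument; the hypothesis is retained only because the lemma will be applied to vertex-transitive $q$-graphs.
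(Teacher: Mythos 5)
Your proof is correct, and it follows the same overall decomposition as the paper ($n\leq 2$; $n=3$; $(n,k)=(4,1)$; $(n,k)=(4,2)$; $n\geq 5$), with the same key idea for the crucial case $(4,2)$, namely that $X\mapsto N(X)$ is a symplectic polarity. There are two points of genuine divergence. For $n=3$ the paper applies Block's Lemma to the point--line incidence matrix of the projective plane to force edge-transitivity from vertex-transitivity, whereas you dispose of the only surviving case $k=1$ by the divisibility obstruction $q+1\nmid q^2+q+1$; your argument is more elementary and, as you note, does not use vertex-transitivity at all (the paper's does). For $(4,2)$ the paper asserts that $v\mapsto N(v)$ is a polarity and invokes the Fundamental Theorem of Projective Geometry to write it as $T\circ\top$ before checking the form is alternating from $v\in v^{\ast}$; you instead verify the duality properties (symmetry, injectivity, compatibility with collinearity) directly from $k$-regularity and then appeal to the Birkhoff--von Neumann classification of reflexive forms, with the correct care in characteristic $2$. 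Your route supplies the incidence-preservation and bijectivity checks that the paper leaves implicit, at the cost of a longer argument; the paper's route is shorter but leans on the reader to see why $\ast$ is a correlation in the first place.
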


\begin{proof}
Since the neighbourhood of a vertex is a $(k+1)$-space, the $q$-graphs with $k = 0$ and $k= n-1$ are trivial (being empty and complete, respectively). There are no non-trivial $q$-graphs with $n \leq 2$ and if $n \leq 4$ the cases to consider are $(n,k) = (3,1)$ or $(4,1)$ or $(4,2)$. 

When $n=3$, the incidence matrix of $1$- and $2$-dimensional subspaces is the incidence matrix of a projective plane, which is invertible. Apply Block's Lemma \cite{block1967orbits} to see that the number of $G$-orbits on $2$-dimensional spaces is equal to the number of orbits on $1$-dimensional spaces. By hypothesis, $\Gamma$ is $G$-vertex-transitive, hence $G$-edge-transitive and $\Gamma$ is trivial. In particular, no non-trivial $q$-graph with parameters $(n,k) = (3,1)$ is flag-transitive.
 
Suppose that $(n,k) = (4,1)$. The neighbourhood of a vertex $X$ is a $2$-space; say $N(X) = \langle x, y\rangle$, where $x,y\in V$. By regularity, every vertex $Y=\langle\alpha x + \beta y\rangle$, where $(\alpha,\beta)\in\F_q^2\setminus\{(0,0)\}$, has neighbourhood $N(X)$. Hence, $N(Y) = N(X)$ and $\Gamma$ is a disjoint union of edges, that is, $\E$ is a $2$-spread. 

Finally, suppose that $(n,k) = (4,2)$. The neighbourhood of each vertex is a $3$-space containing that vertex. Hence the map $\ast: v \mapsto N(v)$ is a symplectic polarity. Using the Fundamental Theorem of Projective Geometry, one infers that $\ast = T\circ\top$ where $\top$ is the canonical duality on $\pg_n(q)$ and $T \in \textrm{PGL}_{n}(q)$. The bilinear form given by $\langle u, v\rangle = uv^{\ast} = u(vT)^{\top} = uT^{\top} v^{\top}$ is symplectic, since $v \in v^{\ast}$ for all non-zero $v$, and the claim follows from the classification of bilinear forms.
\end{proof}

\subsection{Transitive linear groups}

In this section we recall the classification of transitive subgroups of $\GaL_n(q)$. Clearly, the automorphism group of $\Gamma$ belongs to this list. In light of Lemma \ref{lem:smalldim} we can exclude all cases with $n < 5$; we exclude also the case that $\SL_{n}(q) \leq G$ since in this case the $q$-graph is necessarily trivial. 

\begin{theorem}[cf. Theorem 3.1, \cite{giudici2023subgroups}]\label{thm:linclass}
Suppose that $H \leq \GaL_{n}(q)$ where $n \geq 5$ and suppose that $H$ acts transitively on the set of all 1-subspaces of $\mathbb{F}_{q}^{n}$. Then either $H \leq \GaL_{1}(q^n)$, or there exists $H_{0} \unlhd H$ such that $(H_0, n, q)$ are one of the following:
\begin{enumerate}[$(1)$]
\item $\left(\SL_{a}(q^{b}),\,ab,\, q\right)$ where $a > 1$; 
\item $\left(\Sp_{a}(q^{b}),\, ab,\, q\right)$ where $a$ is even; 
\item $\left(G_{2}(q^{b})',\,6b,\, q\right)$ where $q$ is even; 
\item $\left(\SL_{2}(13),\,6,\,3\right)$.
\end{enumerate}
\end{theorem}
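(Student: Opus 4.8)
The plan is to derive this from the classification of all subgroups of $\GaL_n(q)$ transitive on the $1$-subspaces of $\F_q^n$, which is essentially Theorem~3.1 of \cite{giudici2023subgroups}; that result (like every classification of this kind) ultimately rests on Hering's theorem on transitive linear groups, and hence on the classification of finite simple groups. Since \cite{giudici2023subgroups} already supplies, for any point-transitive $H\leq\GaL_n(q)$, either the containment $H\leq\GaL_1(q^n)$ or a suitable normal subgroup $H_0\unlhd H$, the only work here is to restrict that output to the range $n\geq 5$ and to check that the list of surviving possibilities is exactly the one displayed.

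The substantive step is therefore the pruning. Besides $\GaL_1(q^n)$ and the generic families $\SL_a(q^b)$ (with $a>1$), $\Sp_a(q^b)$ (with $a$ even) and $G_{2}(q^b)'$ (with $q$ even) — where in each case one reads off $n=ab$ and the base field $\F_q$ from the natural action of $H_0$ — the classification contains only finitely many exceptional configurations: the groups $A_6$ and $A_7$ in dimension $4$ over $\F_2$; certain actions of $\SL_2(5)$ and $\SL_2(3)$, all of dimension at most $4$; the normalisers of extraspecial or symplectic-type $r$-subgroups that happen to be transitive on the nonzero vectors, which likewise occur only in dimension at most $4$; and $\SL_2(13)$ in dimension $6$ over $\F_3$. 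Running through these, only $(\SL_2(13),6,3)$ survives the hypothesis $n\geq 5$, giving case $(4)$; the generic families give cases $(1)$--$(3)$.

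For completeness I would also recall why point-transitivity forces a group into Hering's dichotomy. Writing $Z=Z(\GL_n(q))$ for the scalars, one has $Z\unlhd\GaL_n(q)$, and $\widehat H:=HZ$ is transitive on the $q^n-1$ nonzero vectors of $\F_q^n$ (if $v^h=\lambda w$ with $h\in H$, then $v^{hz}=w$ with $z=\lambda^{-1}I\in Z$), so $\widehat H$ is a transitive linear group; one then pushes the normal subgroup $H_0\unlhd\widehat H$ back inside $H$ using that $H_0$ is perfect and that $\widehat H$ exceeds $H$ only by central scalars. I do not anticipate a genuine mathematical obstacle here: the only points requiring care are the low-dimensional bookkeeping — making sure that no exceptional entry of Hering's list other than $(\SL_2(13),6,3)$ appears in dimension $\geq 5$ — and, in a self-contained treatment, the slightly delicate descent from $\widehat H$ back down to $H$; both are already handled in \cite{giudici2023subgroups}, so in practice the proof reduces to quoting that theorem and discarding the cases with $n\leq 4$.
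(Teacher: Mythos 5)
Your proposal is correct and matches the paper's treatment: the paper states this result as a direct quotation of Theorem~3.1 of \cite{giudici2023subgroups} (itself resting on Hering's classification of transitive linear groups and hence on CFSG), with the hypothesis $n\geq 5$ serving exactly to discard the sporadic low-dimensional entries other than $\SL_2(13)\leq\GL_6(3)$. Your additional remarks on passing between transitivity on $1$-subspaces and transitivity of $HZ$ on nonzero vectors, and on the bookkeeping of the exceptional cases, are accurate but not needed beyond the citation.
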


The analysis of regular, flag-transitive $q$-graphs falls naturally into case-by-case consideration of the cases of Theorem \ref{thm:linclass}. We eliminate the final case in the following lemma, and deal with all other cases in subsequent sections. 

\begin{corollary}\label{cor:SL(2,13)}
 Let $\Gamma=(\V,\E)$ be a $k$-regular $q$-graph on $V=\F_3^6$, where $k\geq 2$, and suppose that $\SL_{2}(13) \unlhd G\leq\Aut(\Gamma)$. Then $\Gamma$ is not $G$-flag-transitive. 
\end{corollary}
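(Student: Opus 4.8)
The plan is to show that, under the stated hypotheses, the group $G$ is forced to equal $\SL_2(13)$ acting on $\F_3^6$ in its (unique up to duality) $6$-dimensional module, so that a point stabiliser $G_X$ has order exactly $6$; this is then too small to admit the transitive action on $\P(N(X)/X)$ guaranteed by Lemma~\ref{lem:transOnQuotientSpace}.

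\emph{Step 1: pinning down $G$.} Since $q=3$ is prime we have $\GaL_6(3)=\GL_6(3)$, so $\SL_2(13)\unlhd G\leq\GL_6(3)$. The relevant $6$-dimensional $\F_3\SL_2(13)$-module $V$ is absolutely irreducible (it has no proper nontrivial invariant subspace by transitivity, and its endomorphism field cannot be a proper extension of $\F_3$ because $\SL_2(13)$ has no irreducible $\overline{\F_3}$-representation of degree $1,2$ or $3$), and it is faithful because $\PSL_2(13)$ has no irreducible representation of degree $6$. Hence the centraliser of $\SL_2(13)$ in $\GL_6(3)$ consists of scalars, and $Z(\GL_6(3))=\{\pm I\}\leq\SL_2(13)$. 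Therefore $N_{\GL_6(3)}(\SL_2(13))/\SL_2(13)$ embeds into the stabiliser, inside $\mathrm{Out}(\SL_2(13))\cong\Z_2$, of the isomorphism type of $V$; since the nontrivial (diagonal) outer automorphism of $\SL_2(13)$ interchanges its two $6$-dimensional modules, this stabiliser is trivial. Thus $N_{\GaL_6(3)}(\SL_2(13))=\SL_2(13)$ and so $G=\SL_2(13)$. (Alternatively, this equality can be quoted directly from the references underlying Theorem~\ref{thm:linclass}.)

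\emph{Step 2: the counting contradiction.} The space $V=\F_3^6$ has $(3^6-1)/(3-1)=364$ one-dimensional subspaces, and $|\SL_2(13)|=13(13^2-1)=2184=6\cdot 364$; as $\SL_2(13)=G$ is transitive on $\V$, the stabiliser $G_X$ has order $6$. Suppose now, for contradiction, that $\Gamma$ is $G$-flag-transitive. By Lemma~\ref{lem:transOnQuotientSpace}, $G_X$ acts transitively on the $(3^k-1)/(3-1)$ one-dimensional subspaces of the $k$-dimensional space $N(X)/X$. Since $2\leq k\leq n-1=5$, this number lies in $\{4,13,40,121\}$, and by the orbit--stabiliser theorem it must divide $|G_X|=6$; as none of $4,13,40,121$ divides $6$, we reach a contradiction. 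Hence $\Gamma$ is not $G$-flag-transitive.

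The main obstacle is Step~1: one really needs $G=\SL_2(13)$ exactly, not merely $\SL_2(13)\unlhd G$. If $G$ could be a proper overgroup with, say, $[G:\SL_2(13)]=2$ and hence $|G_X|=12$, then the case $k=2$ (transitivity on a set of $4$ points) could not be excluded by counting alone, since groups of order $12$ act transitively on $4$ points. Thus the argument genuinely hinges on the computation of $N_{\GaL_6(3)}(\SL_2(13))$, which in turn rests on the absolute irreducibility of the $6$-dimensional module and on the fact that the outer automorphism of $\SL_2(13)$ does not fix its isomorphism class.
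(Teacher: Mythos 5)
Your proof is correct and follows essentially the same route as the paper: show $\SL_2(13)$ is self-normalising in $\GaL_6(3)=\GL_6(3)$ so that $G=\SL_2(13)$, deduce $|G_X|=6$ from transitivity on the $364$ vertices, and observe that $6$ is not divisible by $(3^k-1)/2$ for any $k\geq 2$. The only difference is that you supply the justification for the self-normalising claim (absolute irreducibility, scalar centraliser, and the outer automorphism swapping the two $6$-dimensional modules), which the paper simply asserts; your closing remark that the normaliser computation is genuinely needed to rule out $|G_X|=12$ in the case $k=2$ is a fair and accurate observation.
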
 

\begin{proof} 
 Note that if $\Gamma$ is $G$-flag-transitive and $X\in\V$, then the stabiliser $G_X$ must act transitively on the set of edges through $X$, and hence must have order divisible by $(3^k-1)/2$, where $2\leq k\leq 6$. Inside $\GaL_6(3)$, the subgroup $\SL_2(13)$ is its own normaliser, and hence $G=\SL_2(13)$. It follows that $G_X$ has order $6$, which is too small to act transitively on the set of edges through $X$. 
\end{proof}

\section{Examples}\label{sec:examples}

In this section, we construct various $q$-graphs, beginning with those arising in Theorem \ref{thm:main}, and then constructing some interesting examples that do not satisfy all the hypotheses of Theorem~\ref{thm:main}. 

\subsection{Flag-transitive \texorpdfstring{$q$}{q}-graphs}

\begin{example} 
The complete $q$-graph on $\mathbb{F}_{q}^n$ has every $2$-dimensional subspace as an edge; the empty $q$-graph has no edges. Each of these $q$-graphs admit an action of $G=\GaL_n(q)$, which acts transitively on ordered bases, and hence they are both $G$-symmetric and $G$-flag-transitive. 
\end{example} 

\begin{example}\label{ex:lineSpeads}
 \emph{($2$-Spreads: $k=1$.)} Let $G\leq\GaL_n(q)$ act transitively on $\V$. If $\E$ is any $G$-invariant set of $2$-spaces inducing a partition of $\V$ (that is, that form a $2$-spread of $V$), then $\Gamma=(\V,\E)$ is $1$-regular and $G$-flag-transitive. This follows from the fact that the neighbourhood of a vertex contains only one edge, and so the $G$-flag-transitive condition reduces to $G$-vertex-transitive. Note that since $\E$ forms a line-spread of $\pg_{n-1}(q)$, this implies that $n$ is even. 
 Let $\L=\{U+v\mid U\in\E,\,v\in V\}$, and note that $\L$ is the set of lines of a linear space with point-set $V$. Since the group $T_V.G$, where $T_V$ is the group of translations by elements of $V$, acts transitively on the set of all incident point-line pairs of the linear space given by $\L$, the main result of \cite{buekenhout1990linear} allows us to completely determine the possibilities for $\E$. Noting that the spreads given by the examples \cite[Section~3.2(a) and (b)]{buekenhout1990linear} are $t$-spreads, where $t$ is odd, we have that one of the following holds:
 \begin{enumerate}[$(1)$]
  \item $\E$ is a Desarguesian $2$-spread;
  \item $\E$ is the set of lines containing $0$ in a nearfield plane of order 9 (see \cite[Section~5]{foulser1964solvable}); or,
  \item $\E$ is the set of lines containing $0$ in one of two non-isomorphic linear spaces known as \emph{Hering spaces}, originally constructed in \cite{hering2020two}. 
 \end{enumerate}
 In each of the above cases, the resulting spread is an orbit of a transitive linear group $G$ in its action on $\binom{V}{2}_q$, and $G$ satisfies: $G\leq\GaL_{n/2}(q^2)$ in case (1),  $G\leq 2_-^{1+4}.{\rm A}_5\leq \GL_4(3)$ in case (2), and $G=\SL_2(13)\leq \GL_6(3)$ in case (3). 
\end{example}

Cases (2) and (3) of Example~\ref{ex:lineSpeads} are flag-transitive but not symmetric, and by the proofs of Theorem~\ref{thm:main} and Corollary~\ref{cor:main} are the only examples of non-Desarguesian spreads that give rise to flag-transitive $q$-graphs.

\begin{example}\label{ex:spreadsByProjPlanes}
\emph{(Regular spreads: $k=2$.)}
 Let $q\in\{2,8\}$, let $K=\F_{q^3}$, let $V=K^a$, so that $n = 3a$, and let $\S$ be the regular $3$-spread of $V$ given by the set of $1$-dimensional $K$-subspaces of $V$. Then each $1$-dimensional $\F_q$-subspace of $V$ (\emph{i.e.} vertex) is contained in a unique element of $\S$. Define a $q$-graph on $V$ by letting the neighbourhood of any vertex be the spread element containing it. Thus, for any vertex $X\in\V$, we have that $N(X)$ is the $K$-span $\langle X \rangle_{K}$, and so $\Gamma$ is a $2$-regular $q$-graph. 
 
 The full stabiliser of $\E$ is $\GaL_a(q^3)$, and hence $\Aut(\Gamma)=\GaL_a(q^3)$. Moreover, for any $X\in\V$ the `local' $q$-graph $\Gamma_X=(\V_X,\E_X)$, where $\V_X$ is the one-spaces of $N(X)$ and $\E_X=\{U\in\E\mid U\leq N(X)\}$, is a complete $q$-graph. Since the action of the stabiliser of $N(X)$ in $\Aut(\Gamma)$ satisfies $\Aut(\Gamma)_{N(X)}^{N(X)}\cong\GaL_1(q^3)$ and $G$ acts transitively on $\S$, Lemma~\ref{lem:completeSemiLinear} implies that $\Gamma$ is flag transitive, with parameters $(n,q,k) = (3a, 2, 2)$ or $(n,q,k) = (3a,8,2)$. Note that $\Gamma$ is also $G$-flag-transitive for $G=\GaL_1(q^{3a})$, since this also has the property that $G_{N(X)}^{N(X)}\cong\GaL_1(q^3)$. We prove in Lemma~\ref{lem:projectionIsSemilinear} that these are the only examples arising in a similar manner from $t$-spreads with $t\geq 3$.
\end{example}

\begin{example}\label{ex:symplecticPolarSpaces}
 \emph{(Symplectic polar spaces.)} Let $n\geq 2$ and let $V = \mathbb{F}_{q}^{2n}$ be equipped with a symplectic bilinear form $f$. Define a $q$-graph on $V$ by letting $\E$ be the totally isotropic $2$-spaces of $V$ and note that $\E$ is preserved by $G=\Sp_{2n}(q)$. By Theorem~\ref{thm:linclass}, $G$ acts transitively on $\V$. Following the notation in \cite[Section~3.5]{wilson2009finite}, let $X=\langle e_1\rangle$, and let $W=\langle e_1,f_1\rangle$, so that $W^\perp=\langle e_2,\ldots,e_{n},f_{n},\ldots,f_2\rangle$. Observe that $N(X)=X\oplus W^\perp$. We now have that the stabiliser $G_X$ contains a subgroup $\Sp_{2n-2}(q)$ acting transitively on $W^\perp\setminus\{0\}$, and hence on the set of edges containing $X$. Thus $\Gamma$ is $G$-flag-transitive. Note that $\Gamma$ is the $q$-analogue of a non-trivial strongly regular graph, see \cite[Example~3.13]{braun2023q}.
\end{example}

The next example involves \emph{generalised hexagons}. We give a brief description of symplectic generalised hexagons here, but for more details about generalised hexagons, see \cite[Section~10.6]{brouwer2012distance}, \cite[Section~2.4]{van2012generalized} or \cite[Section~4.3.8]{wilson2009finite}. Wilson gives the following multiplication table for a non-associative $8$-dimensional algebra over a field of characteristic $2$ (only non-zero entries are displayed).  
\[
\begin{array}{c|cccccccc}
     & x_1 & x_2 & x_3 & x_4 & x_5 & x_6 & x_7 & x_8 \\
    \hline
    x_1 &     &     &     &     & x_1 & x_2 & x_3 & x_4 \\
    x_2 &     &     & x_1 & x_2 &     &  &  x_5   & x_6 \\
    x_3 &     & x_1 &     & x_3    &  &  x_5   &  & x_7 \\
    x_4 & x_1 &     &     &  x_4   &  & x_6 & x_7 &     \\
    x_5 &     & x_2 &  x_3   &  &  x_5   &     &     & x_8 \\
    x_6 & x_2 &     & x_4    &  &  x_6   &     &   x_8  &  \\
    x_7 & x_3 & x_4 &     &     &   x_7  &  x_8   &  &  \\
    x_8 & x_5 & x_6 &  x_7   & x_8 &     &     &     &  \\
\end{array}
\]
Observe that $x_{4}+x_{5}$ is the multiplicative identity for this algebra; and define $\textrm{Tr}(\sum\lambda_{i}x_{i}) = \lambda_{4} + \lambda_{5}$.
Since we work in characteristic $2$, the subspace spanned by $U = \langle x_{1}, x_{2}, x_{3}, x_{4}+x_{5}, x_{6}, x_{7}, x_{8}\rangle$ is trace zero, and the quotient $U/\langle x_{4}+x_{5} \rangle$ admits an action of $G_{2}(2^{b})$, defined to be all elements of $\Sp_{6}(2^b)$ (restricted to this space) which preserve a trilinear form defined by 
\[ T(x,y,z) = \textrm{Tr}(xyz)\]
It can be verified that 
\[ T(x_{1}, x_{6}, x_{7}) = T(x_{2}, x_{3}, x_{8}) = 1\]
and that all other products of basis elements are zero. The generalised hexagon has as edges the three spaces which are isotropic with respect to the symplectic form, and are of trace zero; for $q > 2$ its automorphism group contains the simple group $G_{2}(q)$. 

\begin{example}\label{ex:symplecticHexagons}
 \emph{(Generalised hexagons.)} Let $q$ be even and, following \cite[Section~4.3.5]{wilson2009finite}, let $G=G_2(q)$ act on $V=\F_q^6$, where $\{x_1,x_2,x_3,x_6,x_7,x_8\}$ is a basis for $V$ (note that, since $q$ is even, we can omit $x_4,x_5$ and use the notation of \cite{wilson2009finite} otherwise unchanged). Then, letting $\V$ be the $1$-spaces of $V$ and $\E$ be the lines of the symplectic generalised hexagon (see \cite[Section~4.3.8]{wilson2009finite}) we claim that $\Gamma=(\V,\E)$ is a $2$-regular and $G$-flag-transitive $q$-graph. To see this, first note that $N(\langle x_1\rangle)=\langle x_1,x_2,x_3\rangle$. We then deduce that $\Gamma$ is $G$-flag-transitive from the fact that $G$ acts transitively on $\V$, and that the stabiliser $G_{\langle x_1\rangle}$ induces a transitive action of $\GL_2(q)$ on $\langle x_1,x_2,x_3\rangle/\langle x_1\rangle\setminus\{0\}$.
\end{example}

\subsection{Further examples of \texorpdfstring{$q$}{q}-graphs}\label{sec:furtherExamples}

In this section, we construct several examples of $q$-graphs with one of the hypotheses of Theorem~\ref{thm:main} dropped. Our first two examples can be considered to be complements of one another.

\begin{example}\label{ex:spreadComplements}
 \emph{(Symmetric, but not regular.)} Let $t$ be an integer at least $2$, let $K=\F_{q^t}$, let $V=K^a$, so that $n=at$, and let $\S$ be the regular $t$-spread of $V$ induced by its $K$-vector space structure. Define a $q$-graph $\Gamma=(\V,\E)$ on $V$ by letting $\E$ be the set of all $2$-spaces of $V$ that intersect at least two elements of $\S$ non-trivially. We then have that $\Aut(\Gamma)$ preserves $\S$, and so $\Aut(\Gamma)=\GaL_{n/t}(q^t)$. Since $\GaL_{n/t}(q^t)$ acts transitively on the set of all $K$-bases for $V$, and hence is transitive on adjacent pairs of vertices of $\Gamma$, it follows that $\Gamma$ is symmetric. However, every vertex $X$ is adjacent to $\frac{q^n-q^t}{q-1}$ other vertices, and so $t\geq 2$ implies that $N(X)$ does not form a subspace of $V$. 
\end{example}

\begin{example}\label{ex:subfields}
 \emph{(Regular and vertex-transitive, but not generally flag-transitive.)} Let $V=\F_q^n$ and let $\S$ be a regular $t$-spread of $V$, for some integer $t$ dividing $n$. Define a $q$-graph $\Gamma=(\V,\E)$ on $V$ by letting $\E$ be the set of all $2$-spaces of $V$ that intersect precisely one element of $\S$ non-trivially. We then have that $\Aut(\Gamma)$ preserves $\S$, and hence $\Aut(\Gamma)=\GaL_{n/t}(q^t)$. Thus $\Gamma$ is vertex-transitive. The fact that the neighbourhood of each vertex is the element of $\S$ containing it implies that $\Gamma$ is $(t-1)$-regular. Let $S\in\S$. Then, by Lemma~\ref{lem:completeSemiLinear}, the action $\Aut(\Gamma)_S^S\cong\GaL_1(q^t)$ does not generally act flag-transitively on the projective space induced on $S$, and hence $\Gamma$ is not flag-transitive, unless $\Gamma$ is as in Example~\ref{ex:spreadsByProjPlanes}.
\end{example}

The next example arises from a one-dimensional semi-linear group on $\F_{2^5}$ that happens to act transitively on $2$-dimensional $\F_2$-subspaces, namely $\GaL_1(2^5)$.

\begin{example}\label{ex:EdgeTransNotFlagTrans}
 \emph{(Vertex-transitive and edge-transitive, but not flag-transitive.)} 
 Let $V=\F_2^{5t}$, let $\S$ be a $5$-spread of $V$, and let $\E$ be the set of all $2$-dimensional $\F_2$-spaces of $V$ that are contained in an element $W$ of $\S$. Then $G=\Aut(\Gamma)=\GaL_t(2^5)$, and $\Gamma$ is $G$-vertex-transitive. Moreover, $G_W^W=\GaL_1(2^5)$ and so, by \cite[Theorem~3.1(b)]{giudici2023subgroups}, $G_W^W$ acts transitively on $\binom{W}{2}_2$, and hence $\Gamma$ is edge-transitive. However, by Lemma~\ref{lem:completeSemiLinear}, $\GaL_1(2^5)$ is not transitive on the set of flags contained in $W$, and so $\Gamma$ is not $G$-flag-transitive. 
\end{example}

Finally, we consider some examples obtained via field reduction. Let $F=\mathbb{F}_{q} \leq K=\mathbb{F}_{q^{t}}$ be an extension of fields, and suppose that $\Gamma=(\V,\E)$ is a $q^{t}$-graph on $V=K^n$, so that $\V=\binom{V}{1}_{q^t}$ and $\E\subseteq\binom{V}{2}_{q^t}$. Then we may form a $q$-graph $\Gamma_{K/F}=(\V_{K/F},\E_{K/F})$, where $\V_{K/F}=\binom{V}{1}_q$ and $\E_{K/F}$ is the set of all elements of $\binom{V}{2}_q$ that are contained in an edge of $\Gamma$.

\begin{example}\label{ex:fieldReduction}
 Denote by $\Gamma(n,q)$ be the $q$-graph on $\F_q^{2n}$ that is invariant under $\Sp_{2n}(q)$ given in Example ~\ref{ex:symplecticPolarSpaces}. Let $F=\F_2$, $K=\F_4$, and $L=\F_8$. Then each of the $2$-graphs $\Gamma(12,2)$, $\Gamma(6,4)_{K/F}$, and $\Gamma(4,8)_{L/F}$ are defined on $V=\F_2^{12}$ and are $k$-regular with $k=10$, $k=8$ and $k=6$, respectively. These graphs are also vertex-transitive (see Theorem~\ref{thm:linclass}), but only $\Gamma(12,2)$ is flag-transitive.
\end{example}

\section{One dimensional semi-linear case} \label{sec:semi}

In this section $V = \F_{q^n}$, considered an $n$-dimensional vector space over $\F_{q}$, and $\Gamma$ is a $q$-graph such that $\Aut(\Gamma)\leq\GaL_1(q^n)$. Since the multiplicative group $\F_q^\times$ stabilises every $\F_q$-subspace of $V$, necessarily $\Aut(\Gamma)$ contains $\F_q^\times$. Below we outline the well-known theory of transitive subgroups of $\GaL_1(q^n)$ in terms of a canonical description via certain generators. 

Let $q=p^t$, with $p$ prime, let $\omega$ be a primitive element of the $\F_{q^n}$, and let $\alpha$ be a generator of the Galois group ${\rm Gal}(\F_{q^n}/\F_p)$. Then $G\leq\GaL_1(q^n)$ can be written uniquely, see Foulser \cite[Lemma~4.1]{foulser1964flag} or \cite[Lemma~4.4]{li2009homogeneous}, as $\langle \omega^d,\alpha^s\omega^e\rangle$, where the integers $d,e,s$ satisfy the following:
\begin{enumerate}[(1)]
 \item $d>0$ and $d\mid q^n-1$.
 \item $s>0$ and $s\mid nt$.
 \item $0\leq e<d$ and $d\mid e(q^n-1)/(p^s-1)$.
\end{enumerate}
Foulser's group elements act from the left, while here we are using exponential notation, so we have replaced $\omega^e\alpha^s$ by $\alpha^s\omega^e$ in order to preserve the validity of those results. 

\begin{lemma}\label{lem:1dimDoneViaConj}
 Let $\Gamma=(\V,\E)$ be a $k$-regular and $G$-flag-transitive $q$-graph on $V=\F_{q^n}$, where $G\leq \GaL_1(q^n)$ and $k\geq 2$. Then $k=2$, $q\in\{2,8\}$, and $\Gamma$ is as in Example~\ref{ex:spreadsByProjPlanes}.
\end{lemma}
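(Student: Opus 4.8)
The goal is to show that if $G \le \GaL_1(q^n)$ acts flag-transitively on a $k$-regular $q$-graph with $k \ge 2$, then in fact $k = 2$, $q \in \{2,8\}$, and we are in Example~\ref{ex:spreadsByProjPlanes}. The strategy is to exploit the very rigid structure of one-dimensional semi-linear groups, via Foulser's canonical generators $\langle \omega^d, \alpha^s \omega^e\rangle$, together with the irreducibility constraint coming from Lemma~\ref{lem:transOnQuotientSpace} applied to $N(X)/X$.

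First I would fix a vertex $X$ and consider the neighbourhood $N(X)$, a $(k+1)$-dimensional $\F_q$-subspace of $V = \F_{q^n}$. The key observation is that a one-dimensional semi-linear group is ``small'': the stabiliser $G_X$ of a one-dimensional subspace (equivalently, of a point of $\pg_{n-1}(q)$) is severely restricted, and by Lemma~\ref{lem:transOnQuotientSpace} it must act transitively on the $(q^k - 1)/(q-1)$ one-spaces of $N(X)/X$. The plan is to use the canonical form to pin down which subgroups of $\GaL_1(q^n)$ can even stabilise a one-space $X$ while acting transitively on so many subspaces of a quotient. Since $\F_q^\times \le G$ always, and $\F_q^\times$ fixes $X$, the point-stabiliser $G_X$ contains $\F_q^\times$; the subtlety is controlling the part of $G_X$ coming from the multiplicative generator $\omega^d$ and the Frobenius twist $\alpha^s$.

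The main technical step — and the main obstacle — is to show that the $q$-graph structure forces $N(X)$ to be (the image of) a one-dimensional subspace over some intermediate field $\F_{q^{k+1}}$, i.e.\ that $N(X) = X \cdot \F_{q^{k+1}}$ after identifying $V$ appropriately, so that $\E$ comes from a Desarguesian $(k+1)$-spread and $\Gamma$ is of the type in Example~\ref{ex:spreadsByProjPlanes}. This is where I expect to invoke a conjugation argument (as the lemma name \emph{via conj} suggests): one shows that the multiplicative subgroup $\langle \omega^d \rangle$ acts on $N(X)$, and — because distinct vertices inside $N(X)$ have neighbourhoods that are forced by $k$-regularity to lie inside $N(X)$ as well, combined with transitivity — the set of neighbourhoods $\{N(Y) : Y \in \V\}$ is a $G$-invariant spread of $(k+1)$-spaces. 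A $\GaL_1(q^n)$-invariant spread of $(k+1)$-spaces in $\F_{q^n}$ must be the Desarguesian one (the orbit of $\F_{q^{k+1}}$ under $\F_{q^n}^\times$), which requires $(k+1) \mid n$. Having reduced to the Desarguesian spread picture, the induced ``local'' $q$-graph on each spread element is the complete $q$-graph on an $(k+1)$-dimensional space with automorphism group containing only $\GaL_1(q^{k+1})$ acting on it, and Lemma~\ref{lem:completeSemiLinear} then forces $k + 1 = 3$, i.e.\ $k = 2$, and $q \in \{2,8\}$.

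Finally I would assemble these pieces: the spread-invariance argument gives $(k+1) \mid n$ and identifies $\E$ with the edges of a Desarguesian spread picture; Lemma~\ref{lem:completeSemiLinear} applied to the local complete $q$-graph on a spread element gives $k = 2$ and $q \in \{2,8\}$; and then the construction matches Example~\ref{ex:spreadsByProjPlanes} exactly. The delicate bookkeeping will be in the Foulser-generator computation showing that no ``exotic'' transitive action on $N(X)/X$ is possible other than the one coming from a subfield — in particular ruling out cases where the Frobenius twist $\alpha^s$ contributes essentially to the transitivity on $1$-spaces of the quotient without the underlying module being a one-dimensional space over an intermediate field. I expect this to reduce, after using conditions (1)–(3) on $(d,e,s)$, to an elementary divisibility/counting contradiction whenever $k \ge 3$ or $q \notin \{2,8\}$.
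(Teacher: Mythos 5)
Your opening moves match the paper's: fix a vertex $X$, use Foulser's canonical generators to pin down $G_X$, and aim to land in Example~\ref{ex:spreadsByProjPlanes} via Lemma~\ref{lem:completeSemiLinear}. But the pivotal step of your argument is asserted rather than proved, and it is exactly the hard part. You claim that ``distinct vertices inside $N(X)$ have neighbourhoods that are forced by $k$-regularity to lie inside $N(X)$'', so that $\{N(Y)\colon Y\in\V\}$ is a $G$-invariant $(k+1)$-spread. Regularity forces no such thing: in Example~\ref{ex:symplecticPolarSpaces} the $q$-graph is $k$-regular and flag-transitive, yet $N(Y)=Y^\perp\neq X^\perp=N(X)$ for vertices $Y\leq N(X)$ distinct from $X$. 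The neighbourhoods forming a spread is equivalent to $\Gamma$ being a union of complete $q$-graphs on spread elements, i.e.\ essentially the conclusion of the lemma, so the statement you defer to ``delicate bookkeeping'' with the Foulser conditions is the entire content; nothing in your outline indicates how the conditions on $(d,e,s)$ would actually deliver it, nor how they would exclude $k\geq 3$ or $q=4$.

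The paper's proof supplies precisely the missing mechanism, and it is short. Foulser's form together with $1^{\alpha^s\omega^e}=\omega^e\in X\setminus\{0\}$ forces $G_X=\F_q^\times\rtimes\langle\alpha^s\rangle$. Since $\F_q^\times$ fixes every edge through $X$, the cyclic group $\langle\alpha^s\rangle$ alone must act transitively on the $(q^k-1)/(q-1)$ edges at $X$; hence the $\langle\alpha^s\rangle$-orbit of any $v\in N(X)\setminus X$ has length $\ell$ divisible by $(q^k-1)/(q-1)$, and by the minimal-polynomial argument this orbit generates a field of degree $\ell$ over $\F_{p^s}$ whose span lies inside $N(X)$. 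Comparing $\F_q$-dimensions yields $(k+1)t\geq q^{k-1}+\cdots+q+1$, which collapses everything to $k=p=2$ and $t\in\{1,2,3\}$; the case $t=2$ (so $q=4$) is then excluded because $\F_{32}\leq N(X)$ is incompatible with $N(X)$ being a $3$-dimensional $\F_4$-space. Only after this numerical collapse is $\Gamma$ identified with Example~\ref{ex:spreadsByProjPlanes}. Your proposal needs an argument of this kind in place of the spread assertion; without it the cases $k\geq 3$ and $q\notin\{2,8\}$ are not ruled out.
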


\begin{proof}
 Let $q=p^t$ where $p$ is prime, and let $\alpha$ generate the Galois group of $\mathbb{F}_{q^{n}}$ over $\mathbb{F}_{p}$.
 
 Consider the vertex $X=\F_q\leq V$, and let $\E_X$ be the set of edges through $X$. Then $\E_X$ is a set of $2$-dimensional $\F_q$-subspaces of $N(X)$, which intersect pairwise in $X$ and partition $N(X)\setminus X$, as a result $|\E_X| = q^{k-1} + \cdots + q + 1$. 

 We will apply the discussion preceding this lemma to the stabiliser $G_X$. Since $G_X$ fixes $X$ and we may assume that the scalars $\F_q^\times$ form a subgroup of $G_X$, we have that $G_X\cap\GL_1(q^n)=\F_q^\times$. Thus
 \[
  G_X=\langle\omega^{(q^n-1)/(q-1)}, \alpha^s\omega^e\rangle,
 \]
 for some integers $e,s$ with $0\leq e<(q^n-1)/(q-1)$. Since $1^g\in X\setminus\{0\}$ for any $g\in G_X$, we deduce that $1^{\alpha^s\omega^e}=\omega^e\in\F_q^\times$. Hence $e$ is a multiple of $(q^n-1)/(q-1)$, and the condition $0\leq e<(q^n-1)/(q-1)$ then implies that $e=0$. It follows that 
 \[
  G_X=\F_q^\times\rtimes\langle \alpha^s\rangle.
 \]
 Since elements of $\E_X$ are $\F_{q}$-subspaces, they are invariant under $\F_q^\times\lhd G_X$. By the definition of flag-transitivity, $H=\langle \alpha^s\rangle$ acts transitively on $\E_X$. By the orbit stabiliser-theorem,  $|\E_X|=(q^k-1)/(q-1)$ divides $|H|$. 
 
 Let $v\in N(X)\setminus X$ be a vector. Let $q_0=p^{s}$, so that $v^H=\{v,v^{q_0},\ldots,v^{q_0^{\ell-1}}\}$ for some $\ell$ that is divisible by $(q^k-1)/(q-1)$. By \cite[Theorem~3.33]{lidl1997finite}, the minimal polynomial for $v$ over $\F_{q_0}$ has degree $\ell$, and $\langle v^H\rangle_{\F_{q_0}}=\F_{q_0^\ell}$. Let $r=\gcd(t,s)$, so that $\F_{p^r}$ is the largest field contained in both $\F_q$ and $\F_{q_0}$. Then, since $\langle v^H\rangle_{\F_{p^r}}\leq\langle v^H\rangle_{\F_q}\leq N(X)$, we deduce that $N(X)$ has $\F_q$-dimension at least $\ell r/t$, or at least $\ell/t$, so that $k+1\geq \ell/t$. Hence $(k+1)t\geq q^{k-1}+\cdots+q+1$. The equation of the previous sentence holds only when $k=p=2$ and $t\in\{1,2,3\}$. 
 
 The cases $t=1$ and $t=3$ arise in Example~\ref{ex:spreadsByProjPlanes}. Moreover, the parameters and the argument in this proof directly imply that for $t=1$ we have $N(X)=\F_8$, and for $t=3$ we have $N(X)=\F_{2^9}$. Hence $\Gamma$ is as in Example~\ref{ex:spreadsByProjPlanes} when $t\in\{1,3\}$. Suppose that $k=p=t=2$. Then $\F_{32}$ is contained in $N(X)$, and $N(X)$ should be an $\F_4$-vector space of dimension $3$. However, $\langle \F_{32}\rangle_{\F_4}$ has $\F_4$-dimension $5$, giving a contradiction and completing the proof.
\end{proof}

\section{Groups of Lie type} \label{sec:insol}

Throughout this section, $n = ab$ is the dimension of $V$ over $F=\F_q$ and we consider an embedding of a classical group $G$ in dimension $a$ defined over $K=\F_{q^b}$ into $\GaL_{ab}(q)$. In light of Theorem~\ref{thm:linclass}, the cases to consider are $\SL_{a}(q^{b}) \unlhd G$ and $\Sp_{a}(q^{b}) \unlhd G$ for all prime powers $q$; and $G_{2}(q^{b})'\unlhd G$ where $q$ is even. Note that the case with $b = 1$ (i.e. the embedding is natural) yields trivial $q$-graphs in the first case, and strongly regular $q$-graphs in the second case. 

\subsection{Extension fields}

The purpose of the following lemma will be in dealing with the case that a neighbourhood of a vertex is contained in its $K$-span, for some extension field $K$ of $F=\F_q$, that arises for each family of groups in this section. Lemma~\ref{lem:1dimDoneViaConj} will enable us to completely characterise such $q$-graphs.

\begin{lemma}\label{lem:projectionIsSemilinear}
 Let $F=\F_q$, let $K=\F_{q^b}$ with $2\leq k\leq b$, let $V=K^a$ so that $n=ab$, and let $\Gamma=(\V,\E)$ be a $k$-regular and $G$-flag-transitive $q$-graph on $V$, where $G\leq\GaL_a(q^b)$. Furthermore, suppose that there exists some $X\in \V$ such that $N(X)$ is contained in the $K$-span $\langle X\rangle_K$. Then $\Gamma$ is as in Example~\ref{ex:spreadsByProjPlanes}. 
\end{lemma}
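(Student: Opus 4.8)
The plan is to reduce the problem to the one-dimensional semi-linear case already settled in Lemma~\ref{lem:1dimDoneViaConj}, by showing that the hypothesis ``$N(X) \le \langle X\rangle_K$ for some vertex $X$'' forces the whole $q$-graph to live inside a regular $b$-spread, on which the induced action is one-dimensional semi-linear. First I would fix $X \in \V$ with $N(X) \le \langle X\rangle_K$, and observe that $\langle X\rangle_K$ is a single element $S$ of the Desarguesian $b$-spread $\S$ of $V = K^a$ determined by the $K$-structure; since $\dim_F S = b$ and $\dim_F N(X) = k+1 \le b+1$, the containment $N(X) \le S$ together with $X \le N(X)$ is consistent, and in fact $k+1 \le b$ since $N(X) \le S$ is a proper-or-equal subspace and $k \le b$ was hypothesised (with the edge-case $k+1=b+1$ excluded because $k \le b$). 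The key point is then to propagate this containment to every vertex: because $\Gamma$ is $G$-flag-transitive, hence $G$-vertex-transitive, and $G \le \GaL_a(q^b)$ permutes the spread $\S$, for an arbitrary vertex $Y$ pick $g \in G$ with $X^g = Y$; then $N(Y) = N(X)^g \le S^g$, and $S^g$ is again a spread element (the unique one containing $Y$, namely $\langle Y\rangle_K$). So $N(Y) \le \langle Y\rangle_K$ for \emph{all} $Y \in \V$.

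Next I would identify the resulting $q$-graph as (essentially) one built on a single spread element. From $N(Y) \le \langle Y\rangle_K$ for all $Y$, every edge through $Y$ lies inside the spread element $S_Y := \langle Y\rangle_K$ containing $Y$; hence every edge of $\Gamma$ is contained in a single spread element, and $\Gamma$ decomposes as a disjoint union, over $S \in \S$, of the ``local'' $q$-graphs $\Gamma_S = (\V_S, \E_S)$ where $\V_S = \binom{S}{1}_q$ and $\E_S = \{U \in \E \mid U \le S\}$. Since $G$ acts transitively on $\S$ (transitivity on $\V$ plus the fact that $G$ preserves $\S$, and every nonzero vector lies in a unique spread element), and $G_S$ acts flag-transitively on $\Gamma_S$, all the $\Gamma_S$ are isomorphic and each is $G_S$-flag-transitive. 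Now $G_S^S$, the induced action on $S \cong \F_{q^b}$ viewed as a $b$-dimensional $\F_q$-space, is a subgroup of $\GaL_1(q^b)$ — this is exactly the standard fact recalled in Section~\ref{sec:pre} that the stabiliser in $\GaL_n(q)$ of a regular $b$-spread is $\GaL_{n/b}(q^b)$, whose stabiliser of a spread element induces $\GaL_1(q^b)$ on it. The kernel of $G_S \to G_S^S$ fixes $S$ pointwise and so acts trivially on $\V_S$ and $\E_S$; therefore $\Gamma_S$ is a $k$-regular, $G_S^S$-flag-transitive $q$-graph on $\F_{q^b}$ with $G_S^S \le \GaL_1(q^b)$ and $k \ge 2$.

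At this point Lemma~\ref{lem:1dimDoneViaConj} applies verbatim to $\Gamma_S$: it gives $k = 2$, $q \in \{2, 8\}$, and $\Gamma_S$ equal to the complete $q$-graph on $S = \F_{q^3}$ with $b = 3$ — that is, $\E_S$ is the full set of $2$-spaces of $S$. Reassembling over all $S \in \S$, we get that $\E$ is precisely the set of $2$-spaces of $V$ contained in some element of the Desarguesian $3$-spread $\S$, which is exactly the description in Example~\ref{ex:spreadsByProjPlanes} (with $K = \F_{q^3}$, $V = K^a$, $n = 3a$, $q \in \{2,8\}$, $k = 2$); indeed $N(Y) = \langle Y\rangle_K$ for every vertex, matching the construction there. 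The main obstacle I anticipate is purely bookkeeping rather than conceptual: one must be careful that the hypothesis $2 \le k \le b$ is compatible with $N(X) \le \langle X\rangle_K$ (so that the reduction does not lose cases), and that ``$G_S^S \le \GaL_1(q^b)$'' is invoked correctly — but both are immediate from the spread-stabiliser discussion in Section~\ref{sec:pre} and from $\dim_F\langle X\rangle_K = b$, so no genuine difficulty remains once the propagation step is in place.
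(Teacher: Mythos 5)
Your proposal is correct and follows essentially the same route as the paper: restrict $\Gamma$ to the spread element $\langle X\rangle_K$, observe that the induced flag-transitive action there lies in $\GaL_1(q^b)$, and invoke Lemma~\ref{lem:1dimDoneViaConj} before reassembling over the spread. Your explicit propagation step ($N(Y)\le\langle Y\rangle_K$ for all $Y$ via vertex-transitivity and $G$-invariance of the spread) is a slightly more careful rendering of what the paper compresses into ``the fact that $X$ was chosen arbitrarily,'' but it is the same argument.
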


\begin{proof}
 Let $W=\langle X\rangle_K$ and let $\Gamma_W$ be the $q$-graph on $W$ with edge set $\E_W=\{U\in \E\mid U\leq W\}$. Since $G$ is a subgroup of $\GaL_a(q^b)$, we have that the partition of $V\setminus\{0\}$ induced by the $1$-dimensional $K$-subspaces of $V$ is preserved by $G$. Together with the fact that $G$ acts transitively on $\V$, this implies that $\binom{W}{1}_q$ is a block of imprimitivity for the action of $G$ on $\V$. In particular, $G_W$ acts transitively on $\binom{W}{1}_q$. Moreover, since $G_X$ fixes the $K$-span of $X$, that is, fixes $W$, we deduce that $G_X\leq G_W$. By the flag-transitivity of $\Gamma$, we have that $G_X^W$ acts transitively on the set of edges of $\Gamma_W$ incident with $X$, and thus $\Gamma_W$ is $G_W^W$-flag-transitive. Since the faithful action of the stabiliser of $W$ inside $\GaL_a(q^b)$ is isomorphic to $\GaL_1(q^b)$, we have that $G_W^W\leq \GaL_1(q^b)$ and hence, by Lemma~\ref{lem:1dimDoneViaConj}, $\Gamma_W$ is as in Example~\ref{ex:spreadsByProjPlanes}. The fact that $X$ was chosen arbitrarily then implies that the set of neighbourhoods of $\Gamma$ are precisely the elements of the $3$-spread of $V$ preserved by $\GaL_a(q^b)$. It follows that $\Gamma$ itself is as in Example~\ref{ex:spreadsByProjPlanes}, the result holds.
\end{proof}

\subsection{Special linear groups over extension fields}

\begin{lemma}\label{lem:SL}
 Let $F=\F_q$, let $K=\F_{q^b}$ where $b>1$, and suppose that $\Gamma$ is a non-trivial, $k$-regular and $G$-flag-transitive $q$-graph such that $\SL_{a}(q^{b})\unlhd G$, where $n=ab$. Then $\Gamma$ is as in Example~\ref{ex:spreadsByProjPlanes}.
\end{lemma}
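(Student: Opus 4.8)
The plan is to use the structure of the stabiliser $G_X$ inside $\SL_a(q^b)$ together with Lemma~\ref{lem:transOnQuotientSpace} to show that $N(X)$ must be a $K$-subspace of $V$, at which point Lemma~\ref{lem:projectionIsSemilinear} finishes the argument. Fix a vertex $X = \langle v \rangle_F$ and let $W = \langle v \rangle_K$ be its $K$-span, so $\dim_F W = b$. The key observation is that $G_X$ contains the full stabiliser of the $K$-line $W$ inside $\SL_a(q^b)$, intersected with the pointwise action fixing $\langle v\rangle_F$; concretely, writing $V = W \oplus W'$ as $K$-spaces, the group $G_X$ contains a copy of $\SL_{a-1}(q^b)$ acting on $W'$ and fixing $v$, and (modulo the determinant-one condition) the group $\GaL_1(q^b)$-type scalars on $W$ that fix the $F$-line $\langle v\rangle_F$ — essentially $\F_q^\times$ and a Galois twist. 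More importantly, $G_X$ contains transvections and more general unipotent elements of $\SL_a(q^b)$ fixing $v$ and mapping $W'$ into $W + W'$.

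First I would analyse the $F G_X$-module $N(X)$. By Lemma~\ref{lem:transOnQuotientSpace}, $N(X)/X$ is an irreducible $\F_q G_X$-module on which $G_X$ acts transitively on $1$-spaces. Now $N(X) \le V$, and $V/W \cong K^{a-1}$ carries the natural module for the $\SL_{a-1}(q^b)$ inside $G_X$ (acting on $W'$), which is an irreducible $K G_X$-module of $F$-dimension $(a-1)b$. The submodule lattice of $V$ as an $F G_X$-module is therefore controlled: any $G_X$-invariant subspace either lies inside $W$, or contains a complement-direction forcing it to contain all of $W'$ (by irreducibility of the $\SL_{a-1}(q^b)$-action) and hence — using that the unipotent elements of $\SL_a(q^b)$ fixing $v$ mix $W$ and $W'$ — to contain $W$ as well, i.e. to be all of $V$. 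Since $\Gamma$ is non-trivial, $N(X) \ne V$, and $X \le N(X)$ with $X \le W$; I would conclude that $N(X) \le W = \langle X\rangle_K$. The main technical work here is verifying precisely which $F$-subspaces of $V$ are invariant under the relevant subgroup of $G_X$ — one must be careful that $G_X$ genuinely contains enough of $\SL_{a-1}(q^b)$ and enough unipotent mixing elements, which requires checking that the $F$-line $\langle v\rangle_F$ inside the $K$-line $W$ is stabilised by these elements (it is, since they fix $v$).

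Once $N(X) \subseteq \langle X\rangle_K$ is established, the hypothesis of Lemma~\ref{lem:projectionIsSemilinear} is met (with $2 \le k \le b$ guaranteed because $N(X) \subseteq W$ has $F$-dimension at most $b$, so $k+1 \le b$, and $k \ge 2$ since $\Gamma$ is non-trivial and not a union of edges — if $k = 1$ the graph would be $1$-regular, and one checks separately that $\SL_a(q^b) \unlhd G$ with $b > 1$ cannot preserve a $2$-spread consisting of $F$-lines' spans because $\SL_a(q^b)$ acts irreducibly on $V$ over $F$ when... actually $1$-regularity needs its own short argument). I would dispose of the $k=1$ case by noting that a $1$-regular flag-transitive $q$-graph has $\E$ a $2$-spread (Lemma~\ref{lem:smalldim} analysis extended, or directly), which is $G$-invariant; but $\SL_a(q^b)$ with $b>1$ acts on $V$ in a way incompatible with preserving such a spread made of spans of $F$-lines unless $a=1$, contradicting $a>1$ from Theorem~\ref{thm:linclass}(1). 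Then Lemma~\ref{lem:projectionIsSemilinear} yields that $\Gamma$ is as in Example~\ref{ex:spreadsByProjPlanes}, completing the proof.

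The hard part will be the module-theoretic step: pinning down the $F G_X$-submodule lattice of $V$ precisely enough to force $N(X) \subseteq W$, rather than allowing some intermediate mixed subspace. The cleanest route is probably to identify a specific small unipotent subgroup of $G_X$ (root subgroups of $\SL_a(q^b)$ fixing $v$) and show directly that any $G_X$-invariant $F$-subspace properly containing $X$ but not contained in $W$ must be all of $V$, using a commutator/orbit argument on $v$ and basis vectors of $W'$.
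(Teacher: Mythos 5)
Your proposal follows essentially the same route as the paper: use the copy of $\SL_{a-1}(q^b)$ in $G_X$ to move an arbitrary vector outside $U=\langle X\rangle_K$ into the form $\alpha e_1+e_2$, use a unipotent (root-group) element fixing $e_1$ to extract all of $U$ from any $FG_X$-submodule meeting $V\setminus U$, conclude via Lemma~\ref{lem:transOnQuotientSpace} that $N(X)\leq U$, and finish with Lemma~\ref{lem:projectionIsSemilinear}. The only caveat is your $k=1$ digression, which is unnecessary (the lemma is invoked in the main theorem only after $k=1$ has been dealt with via Example~\ref{ex:lineSpeads}) and whose justification is shaky, since $\SL_a(q^b)$ with $b$ even can in fact preserve a Desarguesian $2$-spread.
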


\begin{proof}
 Let $\Gamma=(\V,\E)$, let $e_1,\ldots,e_a$ be a $K$-basis for $V$, let $X=\langle e_1\rangle_F\in \V$, and let $U=\langle X\rangle_K$. The stabiliser $G_X$ contains a subgroup of shape 
\[ q^{b(a-1)}.((q-1)\times\SL_{a-1}(q^b))\,,\]
 with elements of the form
 \[
  A=
  \begin{bmatrix}
   \lambda & \b 0\\
   \b u^\top & M
  \end{bmatrix}
 \]
 where $\lambda\in F^\times$, $\b u\in K^{a-1}$ and $\det(M) = \lambda^{-1}$ (note that matrices are acting on the right here via $x\mapsto xA$). First note that $U$ is an $FG_X$-submodule of $V$ and, since $b>1$, $X$ is a proper $FG_X$-submodule of $U$. We claim that any $FG_X$-submodule of $V$ intersecting $V\setminus U$ non-trivially also contains $U$, in which case Lemma~\ref{lem:transOnQuotientSpace} implies that $N(X)$ is a submodule of $U$. Let $v\in V\setminus U$. Since $G_X$ contains a copy of $\SL_{a-1}(q^b)$ that fixes $e_1$ and acts transitively on the non-zero vectors of $\langle e_2,\ldots,e_a\rangle$, we may assume that $v=\alpha e_1+e_2$ for some $\alpha\in K$. Let $\b u=(\beta,0\ldots,0)$, where $\beta\in K$, and let
 \[
  B=
  \begin{bmatrix}
   1 & \b 0\\
   \b u^\top & I_{a-1}
  \end{bmatrix}.
 \]
 Then $B\in G_X$ and $vB=(\alpha+\beta) e_1+e_2$. Thus, any $FG_X$-submodule $U'\leq V$ with $v\in U'$ also contains $v-vB=-\beta e_1$. Since $\beta\in K$ was arbitrary, we have that $U\leq U'$, and the result then follows from Lemma~\ref{lem:projectionIsSemilinear}.
\end{proof}

\subsection{Symplectic groups over extension fields}

Let $F=\F_q$, let $K=\F_{q^b}$, and let $f$ be a symplectic form on $V=K^a$, where $a$ is even. If $b=1$, then taking the edge-set $\E$ to the set of all isotropic $2$-spaces gives a strongly regular $q$-graph, as in Example~\ref{ex:symplecticPolarSpaces}. In this section, we show that these are the only examples of $G$-flag-transitive, regular $q$-graphs with $\Sp_a(q^b)\unlhd G$. 

\begin{lemma}\label{lem:Sp}
 Let $F=\F_q$, let $K=\F_{q^b}$, and suppose that $\Gamma$ is a non-trivial, $k$-regular and $G$-flag-transitive $q$-graph of order $n$, where $\Sp_{a}(q^{b})\unlhd G$, $a$ is even and at least $4$, $n=ab$, and $k\geq 2$. Then either $b=1$ and the neighbourhood of $X$ is equal to its dual under a symplectic polarity (as in Example~\ref{ex:symplecticPolarSpaces}), or $b>1$ and $\Gamma$ is as in Example~\ref{ex:spreadsByProjPlanes}.
\end{lemma}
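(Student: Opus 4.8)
The strategy mirrors that of Lemma~\ref{lem:SL}: I would study the $\F_q G_X$-module structure of $N(X)$ and show that it is forced either to be the hyperplane $X^{\perp}$ (giving Example~\ref{ex:symplecticPolarSpaces} when $b=1$) or to lie inside the $K$-span $\langle X\rangle_K$ (whence Lemma~\ref{lem:projectionIsSemilinear} gives Example~\ref{ex:spreadsByProjPlanes} when $b>1$). So the heart of the argument is a submodule computation for the parabolic stabiliser $G_X$ inside $\Sp_a(q^b)$.

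Concretely, let $e_1,\dots,e_{a/2},f_{a/2},\dots,f_1$ be a symplectic $K$-basis of $V$, set $X=\langle e_1\rangle_F$, let $U=\langle e_1\rangle_K$, and let $W=\langle e_1,f_1\rangle_K$, so $W^{\perp}=\langle e_2,\dots,e_{a/2},f_{a/2},\dots,f_2\rangle_K$. The stabiliser $G_X$ contains the stabiliser of $X$ in $\Sp_a(q^b)$, which is a maximal parabolic: it contains a Levi subgroup $(K^\times\times \Sp_{a-2}(q^b))$ acting on $\langle e_1\rangle_K\oplus\langle f_1\rangle_K\oplus W^\perp$, together with the unipotent radical. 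The $\Sp_{a-2}(q^b)$ factor fixes $e_1$ (and $f_1$) and acts transitively on the non-zero vectors of $W^{\perp}$; the torus scales $e_1$ by $\lambda$ and $f_1$ by $\lambda^{-1}$; and the unipotent radical supplies transvection-like elements. As in Lemma~\ref{lem:SL}, I would first observe that $X^{\perp}=\langle e_1,e_2,\dots,e_{a/2},f_{a/2},\dots,f_2\rangle_K$ (an $F$-hyperplane of $F$-codimension... of codimension $b$, since it omits $\langle f_1\rangle_K$) is an $\F_q G_X$-submodule containing $U=\langle e_1\rangle_K$, and that $U$ is itself a submodule with $X\lneq U$ when $b>1$. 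The key claim is then: \emph{any $\F_q G_X$-submodule of $V$ not contained in $X^{\perp}$ is all of $V$, and any $\F_q G_X$-submodule of $X^{\perp}$ that meets $X^{\perp}\setminus U$ nontrivially contains $U$.} Granting this, Lemma~\ref{lem:transOnQuotientSpace} applied to the irreducible $\F_q G_X$-module $N(X)/X$ forces $N(X)\le U$ or $N(X)=X^{\perp}$ (one also checks $N(X)$ cannot be $V$ itself, as that would make $\Gamma$ complete, contrary to non-triviality — or rather $k\le n-2$ follows since $X^{\perp}\ne V$ when... here one must be slightly careful and rule out intermediate possibilities using irreducibility). If $N(X)=X^{\perp}$, then $\dim_F N(X)=n-b$, so $k=n-b-1$; combined with $k\ge 2$ and the structure, one argues $b=1$ and $N(X)=X^{\perp}$ is the dual of $X$ under the symplectic polarity, i.e. Example~\ref{ex:symplecticPolarSpaces}. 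If instead $N(X)\le U=\langle X\rangle_K$, then since $X$ was arbitrary (vertex-transitivity) the hypothesis of Lemma~\ref{lem:projectionIsSemilinear} holds, and $\Gamma$ is as in Example~\ref{ex:spreadsByProjPlanes}.

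For the submodule claim I would argue concretely with matrices acting on the right. To see a submodule $U'$ meeting $V\setminus X^{\perp}$ nontrivially equals $V$: pick $v\in U'$ with nonzero $f_1$-component; using $\Sp_{a-2}(q^b)\le G_X$ (transitive on $W^\perp\setminus\{0\}$ and fixing $e_1,f_1$) and the torus, reduce to $v=\alpha e_1+f_1$ or $v=\alpha e_1+ w$-type vectors, then apply root elements of the unipotent radical of the parabolic — these are the symplectic transvections $x\mapsto x+f(x,e_1)\mu\, e_1$ and the "long root" and "short root" elements moving $f_1$ into $X^\perp$ — to generate first all of $W^\perp$, then $\langle f_1\rangle_K$, inside $U'$; closure under the torus and $\Sp_{a-2}$ then sweeps out everything. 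The inclusion $U\le U'$ for submodules of $X^\perp$ meeting $X^\perp\setminus U$ is the direct analogue of the displayed computation in Lemma~\ref{lem:SL}: if $v=\alpha e_1+w$ with $0\ne w\in W^\perp$, a suitable symplectic transvection or root element $B\in G_X$ with $vB=\alpha e_1+w'$ (same $e_1$-part, different $W^\perp$-part, or adding a $K$-multiple of $e_1$) yields $v-vB\in\langle e_1\rangle_K=U$, and varying the parameter over $K$ gives all of $U$.

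\textbf{Main obstacle.} The delicate point is pinning down \emph{exactly} which elements of $\Sp_a(q^b)$ lie in $G_X$ and verifying they generate the needed submodules — the parabolic stabilising a point (rather than a totally isotropic line) has a two-step unipotent radical with both "long" and "short" root subgroups, and one must be careful that transvections centred at $e_1$ fix $X=\langle e_1\rangle_F$ rather than merely $U=\langle e_1\rangle_K$ (they do, since they act trivially on $\langle e_1\rangle_K$ pointwise up to the scalar action, which lies in $F^\times\le G_X$ anyway). A secondary subtlety is ruling out, via the irreducibility of $N(X)/X$ from Lemma~\ref{lem:transOnQuotientSpace}, any $\F_q G_X$-submodule strictly between $U$ and $X^{\perp}$ or strictly between $X^{\perp}$ and $V$; this requires knowing the composition factors of $V\!\downarrow\! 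G_X$, which the matrix computation above effectively determines. The small-rank constraint $a\ge 4$ is used precisely so that $\Sp_{a-2}(q^b)$ is nontrivial and acts transitively on $W^\perp\setminus\{0\}$; the case $a=2$ is $\Sp_2=\SL_2$ and falls under Lemma~\ref{lem:SL} (or the one-dimensional semilinear analysis), which is why it is excluded from the hypotheses here.
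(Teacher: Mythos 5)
Your overall strategy is the same as the paper's: analyse the $\F_q G_X$-submodule structure of $V$ using the Levi factor $\Sp_{a-2}(q^b)$ (transitive on $W^\perp\setminus\{0\}$) together with root elements of the parabolic, conclude via Lemma~\ref{lem:transOnQuotientSpace} that $N(X)=X^\perp$ when $b=1$ and $N(X)\leq\langle X\rangle_K$ when $b>1$, and finish with Lemma~\ref{lem:projectionIsSemilinear}. However, the first half of your key claim --- that any $\F_qG_X$-submodule of $V$ not contained in $X^\perp$ is all of $V$ --- is false for $b>1$. Every $g\in G_X\cap\Sp_a(q^b)$ preserves $\langle e_1\rangle_K$ and acts on it as multiplication by some $\lambda_g\in K^\times$; fixing $X=\langle e_1\rangle_F$ forces $\lambda_g\in F^\times$, and since $g$ preserves the form it acts on $V/X^\perp\cong\langle f_1\rangle_K$ as the scalar $\lambda_g^{-1}\in F^\times$. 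Hence $V/X^\perp$ decomposes as a direct sum of $b$ scalar $F$-lines, and $X^\perp+\langle f_1\rangle_F$ is a proper $\F_qG_X$-submodule not contained in $X^\perp$ (already for $G=\Sp_a(q^b)$, and it survives adjoining Frobenius). So you cannot deduce $N(X)\leq X^\perp$ from that claim. A related slip: for $b>1$ the torus element scaling $e_1$ by $\lambda$ and $f_1$ by $\lambda^{-1}$ lies in $G_X$ only when $\lambda\in F^\times$, so you cannot normalise coefficients over $K$ using it.

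The repair is essentially the second half of your own claim, which is what the paper actually proves, in the form: any $\F_qG_X$-submodule meeting $V\setminus\langle X\rangle_K$ nontrivially contains $\langle X\rangle_K$. For a vector with nonzero $f_1$-component, the transvections $f_1\mapsto f_1+\mu e_1$ ($\mu\in K$), which fix $e_1$ and hence lie in $G_X$, already yield all of $\langle e_1\rangle_K$; for a vector in $X^\perp\setminus\langle X\rangle_K$ one uses $\Sp_{a-2}(q^b)$ and the root elements $y_{12}(\lambda)$, $x_{12}(\lambda)$ exactly as you describe. Combined with the irreducibility of $N(X)/X$ from Lemma~\ref{lem:transOnQuotientSpace}, this single claim gives $N(X)\leq\langle X\rangle_K$ for $b>1$ directly, with no need to locate $N(X)$ inside $X^\perp$ first; for $b=1$ one only needs $V=W\oplus W^\perp$ and the irreducibility of $W^\perp$ under $\Sp_{a-2}(q)$, which you already have. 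With that adjustment your argument matches the paper's proof.
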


\begin{proof}
 Following the notation in \cite[Section~3.5]{wilson2009finite}, let $e_1,\ldots,e_{a/2},f_{a/2},\ldots,f_1$ be a symplectic $K$-basis for $V$. Assume $X=\langle e_1\rangle_F$, and let $W=\langle e_1,f_1\rangle_K$, so that 
 \[
  W^\perp=\langle e_2,\ldots,e_{a/2},f_{a/2},\ldots,f_2\rangle_K
 \]
 and $V=W\oplus W^\perp$. 

 First, suppose that $b=1$, so that $F=K$. Then there is a copy of $\Sp_{a-2}(q)$ inside $G_X$ that acts transitively on $W^\perp\setminus\{0\}$, and hence $W^\perp$ is an irreducible $FG_X$-submodule of $V$. Setting $N(X)=X\oplus W^\perp$, which is equal to $X^\perp$, we obtain Example~\ref{ex:symplecticPolarSpaces}. Since $V=W\oplus W^\perp$, we see that $X\oplus W^\perp$ is the only $FG_X$-submodule of $V$ that contains $X$ and has dimension at least $3$, and so Example~\ref{ex:symplecticPolarSpaces} is the only example arising for $b=1$.

 Now assume that $b>1$. Let $Y=\langle X\rangle_K$, and note that $Y$ is an $FG_X$-submodule of $V$. We claim that any $FG_X$-submodule $U$ of $V$ that intersects $V\setminus Y$ non-trivially contains $Y$. If this holds, then it follows from Lemma~\ref{lem:transOnQuotientSpace} that $N(X)\leq Y$. To prove the claim, assume $U$ is such a submodule of $V$. Recall that $V=W\oplus W^\perp$ and that $G_X$ contains a copy of $\Sp_{a-2}(q^b)$ that fixes $e_1$ and $f_1$ and acts transitively on $W^\perp\setminus\{0\}$. Thus we may assume that $u=\alpha e_1+\beta f_1+e_2\in U$, where $\alpha,\beta\in K$. For each $\lambda\in K$, $G_X$ contains the elements $y_{12}(\lambda)$, as in \cite[Eq.~(3.24)]{wilson2009finite}, mapping $f_1\mapsto f_1+\lambda e_2$, $f_2\mapsto f_2+\lambda e_1$, and acting trivially on the remaining basis elements. Thus, $u^{y_{12}(\beta^{-1})}-u=e_2$ is contained in $U$. Also, for each $\lambda\in K$, we have that $G_X$ contains the elements $x_{12}(\lambda)$, as in \cite[Eq.~(3.24)]{wilson2009finite}, mapping $f_1\mapsto f_1+\lambda f_2$, $e_2\mapsto e_2+\lambda e_1$, and acting trivially on the remaining basis elements. It follows that $e_2^{x_{12}(\lambda)}-e_2=\lambda e_1$ is in $U$, for each $\lambda\in K$, and thus $Y\leq U$. Applying Lemma~\ref{lem:projectionIsSemilinear} completes the proof.
\end{proof}

\subsection{Lie-type group \texorpdfstring{$G_2(q)$}{G2(q)}}

While the group $G_{2}(q)$ acts naturally on a 7-dimensional space, in characteristic $2$ it has an exceptional embedding into 6 dimensions, as a subgroup of $\Sp_{6}(q)$. As a result, the edges of a $G_{2}(q)$-invariant $q$-graph will be contained in those of the corresponding symplectic $q$-graph, described in the previous section.

\begin{lemma}\label{lem:G2}
 Let $F=\F_q$ with $q$ even, let $K=\F_{q^b}$, and suppose that $\Gamma$ is a non-trivial, $k$-regular and $G$-flag-transitive $q$-graph of order $n$, where $G_2(q^{b})'\unlhd G$, $n=6b$, and $k\geq 2$. Then either $b=1$ and $\Gamma$ is as in Example~\ref{ex:symplecticHexagons}, or $b>1$ and $\Gamma$ is as in Example~\ref{ex:spreadsByProjPlanes}.
\end{lemma}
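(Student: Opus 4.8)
The plan is to mirror the structure of Lemma~\ref{lem:SL} and Lemma~\ref{lem:Sp}, splitting into the cases $b=1$ and $b>1$, with the novelty concentrated entirely in the $b=1$ analysis since the $b>1$ case will reduce to the extension-field machinery already developed.

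For $b>1$, I would argue exactly as in the symplectic case: since $G_2(q^b)' \unlhd G \leq \Sp_6(q^b)$ acting on $V = K^6$ (where $K = \F_{q^b}$), the group $G$ preserves the regular $b$-spread of $V$ induced by the $K$-structure. Fix $X = \langle e_1\rangle_F$ and set $Y = \langle X\rangle_K$, an $FG_X$-submodule of $V$. The key claim is that any $FG_X$-submodule meeting $V \setminus Y$ nontrivially contains $Y$; granting this, Lemma~\ref{lem:transOnQuotientSpace} forces $N(X) \leq Y$, and then Lemma~\ref{lem:projectionIsSemilinear} delivers Example~\ref{ex:spreadsByProjPlanes}. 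To prove the claim I would work with the root subgroups of $G_2(q^b)$ fixing $\langle x_1\rangle$ (in the notation of Example~\ref{ex:symplecticHexagons}): the stabiliser $G_{\langle x_1\rangle}$ contains a parabolic whose Levi complement induces $\GL_2(q^b)$ on $\langle x_1,x_2,x_3\rangle/\langle x_1\rangle$ and acts transitively on $\langle x_2,x_3\rangle \setminus \{0\}$, together with unipotent elements of the form $x_i \mapsto x_i + \lambda x_1$ for the appropriate $i$ and all $\lambda \in K$. Using transitivity we may assume a submodule element has the shape $\alpha x_1 + x_j$ for a fixed $j \neq 1$; subtracting its image under a suitable unipotent element produces $\mu x_1$ for arbitrary $\mu \in K$, hence $Y \leq U$. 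Establishing precisely which root elements of $G_2$ are available and how they act on the six-dimensional module is the fiddly part, but it is a finite check using the multiplication table and trilinear form already displayed.

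For $b=1$, here $G_2(q)' \unlhd G \leq \Sp_6(q)$ acts on $V = \F_q^6$, and $k \geq 2$ means $N(X)$ has dimension at least $3$. Since $G_2(q) \leq \Sp_6(q)$, every edge of $\Gamma$ is a totally isotropic $2$-space, so $N(X) \leq X^\perp$, which has dimension $5$. I would then use Lemma~\ref{lem:transOnQuotientSpace}: $N(X)/X$ must be an irreducible $\F_q G_X$-submodule of $X^\perp/X$. The module $X^\perp/X$ is $4$-dimensional and carries the induced symplectic form; in the notation of Example~\ref{ex:symplecticHexagons} with $X = \langle x_1\rangle$ we have $X^\perp = \langle x_1,x_2,x_3,x_6,x_7\rangle$ and the edges through $X$ all lie in $\langle x_1,x_2,x_3\rangle$ by the hexagon geometry, so $N(X) = \langle x_1,x_2,x_3\rangle$ is forced, giving $k=2$ and Example~\ref{ex:symplecticHexagons}. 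To make this rigorous rather than just citing the hexagon, I would identify the $\F_q G_{\langle x_1\rangle}$-submodule structure of $X^\perp/X$: the parabolic $G_{\langle x_1\rangle}$ has a composition series on $X^\perp/X$ with a $2$-dimensional submodule $\langle x_2,x_3\rangle/\langle x_1\rangle$ (on which the Levi acts as $\GL_2(q)$ via the natural module) — but $\langle x_1,x_2,x_3\rangle$ itself, \emph{not} its quotient, is what appears as $N(X)$. The point is that $\langle x_2,x_3\rangle + \langle x_1\rangle = \langle x_1,x_2,x_3\rangle$ is the unique $3$-dimensional $\F_q G_X$-submodule of $X^\perp$ properly containing $X$ on which $G_X$ acts with the required transitivity on $1$-spaces of the quotient, and any larger submodule fails irreducibility of the quotient. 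So the only possibility is $N(X) = \langle x_1,x_2,x_3\rangle$ for every $X$, and the set of such neighbourhoods is precisely the line set of the symplectic generalised hexagon, whence $\Gamma$ is as in Example~\ref{ex:symplecticHexagons}.

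The main obstacle I anticipate is the $b=1$ submodule analysis: unlike the $\SL$ and $\Sp$ cases, where the stabiliser contains a large classical subgroup acting transitively on a complement, here $G_{2}(q)$ is smaller and the relevant module $X^\perp/X$ need not be semisimple, so I must carefully rule out submodules of dimension $4$ (i.e.\ $N(X) = X^\perp$) by checking that $G_X$ does \emph{not} act transitively on the $1$-spaces of the $4$-dimensional quotient $X^\perp/X$ — equivalently, that the hexagon's point-graph is not complete-local. This should follow because the parabolic $G_{\langle x_1\rangle}$ has an orbit structure on $X^\perp \setminus \langle x_1,x_2,x_3\rangle$ that is incompatible with transitivity on $1$-spaces of $X^\perp/X$; concretely, the unipotent radical and Levi together can only move a vector $x_6$ (say) within a proper affine subset. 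A clean way to finish is to observe that if $N(X) = X^\perp$ then $\Gamma$ would be a $4$-regular $\Sp_6(q)$-type graph, contradicting that $G_2(q) < \Sp_6(q)$ is too small: the stabiliser $G_{\langle x_1\rangle}$ in $G_2(q)$ has order $q^5|\GL_2(q)|$, whereas transitivity on the $(q^4-1)/(q-1)$ edges through $X$ sitting inside a $5$-space would already be consistent, so the sharper arithmetic or module-theoretic argument is genuinely needed. I would therefore settle the matter by directly exhibiting the $\F_q G_X$-submodule lattice of $X^\perp$ from the root-group action, which is a bounded computation given the data in Example~\ref{ex:symplecticHexagons}.
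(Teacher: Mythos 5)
Your overall strategy coincides with the paper's: split on $b$; for $b>1$ reduce to Lemma~\ref{lem:projectionIsSemilinear} by showing that every $\F_qG_X$-submodule meeting $V\setminus\langle X\rangle_K$ contains $\langle X\rangle_K$; and for $b=1$ pin down the $\F_qG_X$-submodule lattice of $V$ so that irreducibility of $N(X)/X$ (Lemma~\ref{lem:transOnQuotientSpace}) forces $N(X)=\langle x_1,x_2,x_3\rangle$. Your $b=1$ analysis is the paper's: $V$ is uniserial for $G_X$ with chain $X<\langle x_1,x_2,x_3\rangle<X^\perp<V$, the $\GL_2(q)$ Levi acting transitively on the nonzero vectors of the two middle quotients, so the option $N(X)=X^\perp$ is excluded exactly as you anticipate, and no extra arithmetic is needed. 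One aside in your write-up is a non sequitur: $G_2(q)\leq\Sp_6(q)$ does not by itself force the edges of $\Gamma$ to be totally isotropic, since the set of non-isotropic $2$-spaces is also $G$-invariant; but you do not need this, as the submodule argument already excludes everything except $\langle x_1,x_2,x_3\rangle$ and $V$ (the latter giving the complete $q$-graph, ruled out by non-triviality).

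The one step that would fail as written is the normal-form reduction in the $b>1$ case. You propose to use transitivity to assume a submodule element has the shape $\alpha x_1+x_j$ for a single fixed $j\neq 1$. But $G_X$ preserves the flag $\langle x_1\rangle_K<\langle x_1,x_2,x_3\rangle_K<\langle x_1\rangle_K^\perp<V$, so no element of $G_X$ can move a vector with nonzero $x_8$-coordinate onto one supported on $x_1,x_2$; there is no single normal form. Moreover, for $b>1$ the full $\GL_2(q^b)$ Levi is not contained in $G_X$ (only the unipotent radical together with the part of the Levi fixing the $\F_q$-line $\langle x_1\rangle_F$), so even transitivity on $\langle x_2,x_3\rangle_K\setminus\{0\}$ has to be justified via $\SL_2(q^b)$ rather than assumed. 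The repair is a staged descent through the flag, which is how the paper argues using the unipotent generators $A,C,E,F$ of \cite[(4.34)]{wilson2009finite}: for $v\in W$ outside $\langle x_1\rangle_K^\perp$ the difference $v^{A(1)}+v$ lies in $\langle x_1,x_2\rangle_K$; for $v\in\langle x_1\rangle_K^\perp$ outside $\langle x_1,x_2,x_3\rangle_K$ the difference $v^{C(1)}+v$ lies in $\langle x_2,x_3\rangle_K$; and for $v\in\langle x_1,x_2,x_3\rangle_K$ outside $\langle x_1\rangle_K$, applying $E(1)$ and then $F(\lambda)$ for all $\lambda\in K$ yields all of $\langle x_1\rangle_K$. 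Your proposed ``finite check using the multiplication table'' would presumably surface this structure, but the single-orbit reduction you describe is not available and must be replaced by this three-step argument, and no transitivity of the stabiliser on any complement is used.
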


\begin{proof}
 Let $F=\F_q$ and let $K=\F_{q^b}$. We will follow the description of $G_2(q^b)$ given by Wilson in \cite[Section~4.3.5]{wilson2009finite}. Noting that when $q$ is even $G_2(q^b)$ embeds into $\Sp_6(q^b)$, and this allows us to work with $V=\langle x_1,x_2,x_3,x_6,x_7,x_8\rangle_K$ (effectively ignoring $x_4,x_5$). Also, we will be particularly interested in the generators $A(\lambda),B(\lambda),C(\lambda),D(\lambda),E(\lambda),F(\lambda)$ of the unipotent subgroup $U$ of $G$ as given in \cite[(4.34)]{wilson2009finite}). It is worth pointing out here that the unipotent subgroup $U$ is contained in $G_X$, and also that $1=-1$, since we are working in characteristic $2$. 
 
Let $X=\langle x_1\rangle_F$, let  $Y=\langle x_1,x_2,x_3\rangle_K$, let $Z=\langle x_1\rangle_K$, and note that under the symplectic form on $V$ that is preserved by $G$ we have $Z^\perp=\langle x_1,x_2,x_3,x_6,x_7\rangle_K$. Since $G_X$ preserves $Y$, we have that the following flag is preserved by $G_X$:
 \[
  Z< Y< Z^\perp< V.
 \]
 Suppose $b=1$, so that $F=K$ and $X=Z$. Then $q^{2+3}:\GL_2(q)\leq G_X$ (see \cite[Section~4.3.5]{wilson2009finite}). Moreover, since $\GL_2(q)$ acts transitively on the non-zero vectors of each of the quotient spaces $Y/X$ and $X^\perp/Y$, we deduce that $V$ is a uniserial $FG_X$-module with $\b 0<X< Y<  X^\perp< V$ being the corresponding maximal chain of submodules. By Lemma~\ref{lem:transOnQuotientSpace}, we have that $N(X)=Y$ and hence $\Gamma$ is as in Example~\ref{ex:symplecticHexagons}. 
 
 We assume from now on that $b\geq 2$, in which case $G_X$ no longer contains $\GL_2(q^b)$. Let $W$ be an $FG_X$-submodule of $V$ such that $W\cap (V\setminus Z)$ is non-trivial. Then we claim that $Z\leq W$. Let $v_1=u_1+\beta_8 x_8$, where $u_1\in Z^\perp$ and $\beta_8\in K^\times$, so that $v_1$ can be considered to be an arbitrary element of $V\setminus Z^\perp$. Then $v_1^{A(1)}+v_1=\beta_1 x_1+\beta_8 x_2\in W$ for some $\beta_1\in K$. Hence $v_1\in W$ implies that $v_1$ intersects $Y\setminus Z$ non-trivially. Let $v_2=u_2+\beta_6x_6+\beta_7x_7\in Z^\perp\setminus Y$, where $\beta_6,\beta_7\in K$ and $(\beta_6,\beta_7)\neq(0,0)$. Then $v_2^{C(1)}+v_2=\beta_6x_2+\beta_7x_3$. Thus, $v_2\in W$ implies $W$ intersects $Y\setminus Z$ non-trivially, and hence there exists some $v_3\in (W\cap Y)\setminus Z$. Let $v_3=\gamma_1x_1+\gamma_2x_2+\gamma_3x_3$, where $\gamma_i\in K$ for $i\in\{1,2,3\}$ and $(\gamma_2,\gamma_3)\neq(0,0)$. Suppose that $\gamma_3\neq 0$. Then $v_3^{E(1)}+v_3=\gamma_3 x_2$, which implies that, upon possibly replacing $v_3$ with $\gamma_3 x_2$, we may assume that $\gamma_2\neq 0$ and $\gamma_3=0$. Supposing we have done such a substitution (if necessary), then for each $\lambda\in K$ we have $v_3^{F(\lambda)}+v_3=\lambda\gamma_2 x_1\in W$, and hence $W$ contains $Z$. The result then follows from Lemma~\ref{lem:projectionIsSemilinear}.
\end{proof}

\section{Main proofs}\label{sec:mainproof}

We are now in a position to prove our main results, first dealing with regular, flag-transitive $q$-graphs.

\begin{proof}[Proof of Theorem~\ref{thm:main}]
 By assumption, $\Gamma=(\V,\E)$ is a $k$-regular and $G$-flag-transitive $q$-graph on $V=\F_q^n$ where $\E$ is neither empty, nor the set of all $2$-spaces of $V$. If $k=1$, then the neighbourhood of each vertex is a $2$-space, so that each vertex is contained in precisely one edge, and $\Gamma$ is as in Example~\ref{ex:lineSpeads}, and part (1) holds. Assume from now on that $k\geq 2$. If $n\leq 4$, then Lemma~\ref{lem:smalldim} implies that part (3) holds, and so we may now also assume that $n\geq 5$. Since $\Gamma$ is $G$-flag-transitive, it follows that $G$ acts transitively on the $1$-spaces of $V$, and hence Theorem~\ref{thm:linclass} may be applied. If $G\leq \GaL_1(q^n)$ then it follows from Lemma~\ref{lem:1dimDoneViaConj} that $k=2$, $q\in\{2,8\}$ and $\Gamma$ is as in part (2). In the remainder of the proof we consider each of the cases Theorem~\ref{thm:linclass}(1)--(4). 
 
 Suppose that $n=ab$, for some integer $a>1$, and that $\SL_a(q^b)\unlhd G$. Note that if $b=1$ then $G$ acts transitively on ${\binom{V}{2}}_q$, and so we may also assume that $b>1$. By Lemma~\ref{lem:SL}, $\Gamma$ is as in part (2). Suppose that $n=ab$, with $a$ even, $\Sp_a(q^b)\trianglelefteqslant G$, and let $K=\F_{q^b}$. By Lemma~\ref{lem:Sp}, part (2) or part (3) holds. Next, suppose that $n=6b$, $q$ is even, $G_2(q^b)'\unlhd G$, and $K=\F_{q^b}$. Then Lemma~\ref{lem:G2} implies that part (2) or part (4) holds. Finally, the case $q=3$, $n=6$ and $\SL_2(13)\unlhd G$ is ruled out by Corollary~\ref{cor:SL(2,13)}, completing the proof.
\end{proof}

Next we prove our main result concerning regular, symmetric $q$-graphs. 

\begin{proof}[Proof of Corollary~\ref{cor:main}]
 Suppose that $\Gamma$ is a $k$-regular and $G$-symmetric $q$-graph on $V=\F_q^n$. Then $\Gamma$ is necessarily $G$-flag-transitive, so we may apply Theorem~\ref{thm:main}. Note that since a pair of vertices define a unique $2$-space in $V$ and $G$ is contained in $\GaL_n(q)$, the stabiliser $G_U$ of an edge $U\in\E$ must act $2$-transitively on the set of vertices incident with $U$. Conversely, given that $\Gamma$ is $G$-flag-transitivity, this condition is sufficient to imply that $\Gamma$ is $G$-symmetric. The edge stabiliser $G_U$ is $2$-transitive on the vertices incident with $U$ if and only if either $\Gamma$ is as in Theorem~\ref{thm:main}(1) with $q=2$ whence $G_U^U=\GaL_1(4)\cong\s_3$, or as in Theorem~\ref{thm:main}(3) or (4), where $\SL_2(q)\leq G_U^U$.
\end{proof}

Finally, we complete the paper by proving that every regular, flag-transitive $q$-graph has a symmetric graph as its classical counterpart.

\begin{proof}[Proof of Proposition~\ref{prop:CCsymmetric}]
 By Theorem~\ref{thm:main} and Corollary~\ref{cor:main}, either $\Gamma$ is as in Example~\ref{ex:lineSpeads} or~\ref{ex:spreadsByProjPlanes}, in which case $\Gamma_C$ is a union of cliques and thus a symmetric graph, or $\Gamma$ is symmetric, and $\Gamma_C$ is symmetric by Corollary~\ref{cor:classicalSymmetric}.
\end{proof}

\vspace{0.5cm}
\noindent
\textbf{Acknowledgments:} Daniel Hawtin has been supported in part by the Croatian Science Foundation under the project HRZZ-IP-2022-10-4571, by the European Union under the NextGenerationEU program, and by funding from DCU and University College Dublin. Padraig \'{O} Cath\'{a}in was supported by funding from the DCU Faculty of Humanities and Social Sciences.

Part of this work was carried out during the first Dublin Discrete Mathematics Workshop held at Dublin City University (DCU) in June 2025, during which the authors benefitted from helpful discussions with Dr Ronan Egan and Dr Andrea {\v S}vob. The authors gratefully acknowledge these discussions, and helpful feedback from Prof. Dean Crnkovi\'c on a draft of this paper.

\bibliographystyle{plain}  
\bibliography{ref}
\end{document}